\numberwithin{equation}{section}
\numberwithin{figure}{section}
  \theoremstyle{plain}
  \newtheorem*{thm*}{\protect\theoremname}
\theoremstyle{plain}
\newtheorem{thm}{\protect\theoremname}
  \theoremstyle{definition}
  \newtheorem{example}[thm]{\protect\examplename}
  \theoremstyle{plain}
  \newtheorem{prop}[thm]{\protect\propositionname}
  \theoremstyle{plain}
  \newtheorem{cor}[thm]{\protect\corollaryname}
  \theoremstyle{plain}
  \newtheorem{lem}[thm]{\protect\lemmaname}
  \theoremstyle{remark}
  \newtheorem{rem}[thm]{\protect\remarkname}
\newtheorem{parn}{}[subsection]
  \providecommand{\corollaryname}{Corollary}
  \providecommand{\examplename}{Example}
  \providecommand{\lemmaname}{Lemma}
  \providecommand{\propositionname}{Proposition}
  \providecommand{\remarkname}{Remark}
  \providecommand{\theoremname}{Theorem}
\providecommand{\theoremname}{Theorem}
\begin{document}

\author{Adrien Dubouloz}

\address{Institut de Mathématiques de Bourgogne, 9 avenue Alain Savary, 21
078 Dijon, France}

\email{adrien.dubouloz@u-bourgogne.fr}

\author{Takashi Kishimoto}

\address{Department of Mathematics, Faculty of Science, Saitama University,
Saitama 338-8570, Japan}

\email{tkishimo@rimath.saitama-u.ac.jp}

\thanks{This project was partialy funded by ANR Grant \textquotedbl{}BirPol\textquotedbl{}
ANR-11-JS01-004-01, Grant-in-Aid for Scientific Research of JSPS no.
24740003, and Polish Ministry of Science and Higher Education, \textquotedbl{}Iuventus
Plus\textquotedbl{}, Grant no. IP/2012/038/272. The research was done
during a visit of the first author at the University of Saitama. The
authors thank this institution for its generous support and the excellent
working conditions. }

\subjclass[2000]{14R25; 14D06; 14M20; 14E30}

\title{Families of affine ruled surfaces: existence of cylinders}
\begin{abstract}
We show that the generic fiber of a family $f:X\rightarrow S$ of
smooth $\mathbb{A}^{1}$-ruled affine surfaces always carries an $\mathbb{A}^{1}$-fibration,
possibly after a finite extension of the base $S$. In the particular
case where the general fibers of the family are irrational surfaces,
we establish that up to shrinking $S$, such a family actually factors
through an $\mathbb{A}^{1}$-fibration $\rho:X\rightarrow Y$ over
a certain $S$-scheme $Y\rightarrow S$ induced by the MRC-fibration
of a relative smooth projective model of $X$ over $S$. For affine
threefolds $X$ equipped with a fibration $f:X\rightarrow B$ by irrational
$\mathbb{A}^{1}$-ruled surfaces over a smooth curve $B$, the induced $\mathbb{A}^{1}$-fibration
$\rho:X\rightarrow Y$ can also be obtained from a relative Minimal
Model Program applied to a smooth projective model of $X$ over $B$. 
\end{abstract}
\maketitle

\section*{Introduction}

The general structure of smooth non complete surfaces $X$ with negative
(logarithmic) Kodaira dimension is not fully understood yet. For say
smooth quasi-projective surfaces over an algebraically closed field
of characteristic zero, it was established by Keel and McKernan \cite{KM99}
that the negativity of the Kodaira dimension is equivalent to the
fact that $X$ is generically covered by images of the affine line
$\mathbb{A}^{1}$ in the sense that the set of points $x\in X$ with
the property that there exists a non constant morphism $f:\mathbb{A}^{1}\rightarrow X$
such that $x\in f(\mathbb{A}^{1})$ is dense in $X$ with respect
to the Zariski topology. This property, called $\mathbb{A}^{1}$-uniruledness
is equivalent to the existence of an open embedding $X\hookrightarrow(\overline{X},B)$
into a complete variety $\overline{X}$ covered by proper rational
curves meeting the boundary $B=\overline{X}\setminus X$ in at most
one point. In the case where $X$ is smooth and affine, an earlier
deep result of Miyanishi-Sugie \cite{MS80} asserts the stronger property
that $X$ is $\mathbb{A}^{1}$-ruled: there exists a Zariski dense
open subset $U\subset X$ of the form $U\simeq Z\times\mathbb{A}^{1}$
for a suitable smooth curve $Z$. Equivalently, $X$ admits a surjective
flat morphism $\rho:X\rightarrow C$ to an open subset $C$ of a smooth
projective model $\overline{Z}$ of $Z$, whose generic fiber is isomorphic
to the affine line over the function field of $C$. Such a morphism
$\rho:X\rightarrow C$ is called an $\mathbb{A}^{1}$-fibration, and
we say that $\rho$ is of affine type or complete type when the base
curve $C$ is affine or complete, respectively. 

Smooth $\mathbb{A}^{1}$-uniruled but not $\mathbb{A}^{1}$-ruled
affine varieties are known to exist in every dimension $\geq3$ \cite{DK14}.
Many examples of $\mathbb{A}^{1}$-uniruled affine threefolds can
be constructed in the form of flat families $f:X\rightarrow B$ of
smooth $\mathbb{A}^{1}$-ruled affine surfaces parametrized by a smooth
base curve $B$. For instance, the complement $X$ of a smooth cubic
surface $S\subset\mathbb{P}_{\mathbb{C}}^{3}$ is the total space
of a family $f:X\rightarrow\mathbb{A}^{1}=\mathrm{Spec}(\mathbb{C}[t])$
of $\mathbb{A}^{1}$-ruled surfaces induced by the restriction of
a pencil $\overline{f}:\mathbb{P}^{3}\dashrightarrow\mathbb{P}^{1}$
on $\mathbb{P}^{3}$ generated by $S$ and three times a tangent hyperplane
$H$ to $S$ whose intersection with $S$ consists of a cuspidal cubic
curve. The general fibers of $f$ have negative Kodaira dimension,
carrying $\mathbb{A}^{1}$-fibrations of complete type only, and the
failure of $\mathbb{A}^{1}$-ruledness is intimately related to the
fact that the generic fiber $X_{\eta}$ of $f$, which is a surface
defined over the field $K=\mathbb{C}(t)$, does not admit any $\mathbb{A}^{1}$-fibration
defined over $\mathbb{C}(t)$. Nevertheless, it was noticed in \cite[Theorem 6.1]{GMM14}
that one can infer straight from the construction of $f:X\rightarrow\mathbb{A}^{1}$
the existence a finite base extension $\mathrm{Spec}(L)\rightarrow\mathrm{Spec}(K)$
for which the surface $X_{\eta}\times_{\mathrm{Spec}(K)}\mathrm{Spec}(L)$
carries an $\mathbb{A}^{1}$-fibration $\rho:X_{\eta}\times_{\mathrm{Spec}(K)}\mathrm{Spec}(L)\rightarrow\mathbb{P}_{L}^{1}$
defined over the field $L$. 

A natural question is then to decide whether this phenomenon holds
in general for families $f:X\rightarrow B$ of $\mathbb{A}^{1}$-ruled
affine surfaces parameterized by a smooth base curve $B$, namely,
does the existence of $\mathbb{A}^{1}$-fibrations on the general
fibers of $f$ imply the existence of one on the generic fiber of
$f$, possibly after a finite extension of the base $B$ ? A partial
positive answer is given by Gurjar-Masuda-Miyanishi in \cite[Theorem 3.8]{GMM14}
under the additional assumption that the general fibers of $f$ carry
$\mathbb{A}^{1}$-fibrations of affine type. The main result in \emph{loc.
cit.} is derived from the study of log-deformations of suitable relative
normal projective models $\overline{f}:(\overline{X},D)\rightarrow B$
of $X$ over $B$ with appropriate boundaries $D$. It is established
in particular that the structure of the boundary divisor of a well
chosen smooth projective completion of a general closed fiber $X_{s}$
is stable under small deformations, a property which implies in turn,
possibly after a finite extension of the base $B$, the existence
of an $\mathbb{A}^{1}$-fibration of affine type on the generic fiber
of $f$.  This log-deformation theoretic approach is also central
in the related recent work of Flenner-Kaliman-Zaidenberg \cite{FKZ13}
on the classification of normal affine surfaces with $\mathbb{A}^{1}$-fibrations
of affine type up to a certain notion of deformation equivalence,
defined for families which admit suitable relative projective models
satisfying Kamawata's axioms of logarithmic deformations of pairs
\cite{Ka78}. The fact that the $\mathbb{A}^{1}$-fibrations under
consideration are of affine type plays again a crucial role and, in
contrast with the situation considered in \cite{GMM14}, the restrictions
imposed on the families imply the existence of $\mathbb{A}^{1}$-fibrations
of affine type on their generic fibers.\\

Our main result (Theorem \ref{thm:MainTh-Family-Etale-cylinder})
consists of a generalization of the results in \cite{GMM14} to families
$f:X\rightarrow S$ of $\mathbb{A}^{1}$-ruled surfaces over an arbitrary
normal base $S$, which also includes the case where a general closed
fiber $X_{s}$ of $f$ admits $\mathbb{A}^{1}$-fibrations of complete
type only. In particular, we obtain the following positive answer
to Conjecture 6.2 in \cite{GMM14}: 
\begin{thm*}
Let $f:X\rightarrow S$ be dominant morphism between normal complex
algebraic varieties whose general fibers are smooth $\mathbb{A}^{1}$-ruled
affine surfaces. Then there exists a dense open subset $S_{*}\subset S$,
a finite \'etale morphism $T\rightarrow S_{*}$ and a normal $T$-scheme
$h:Y\rightarrow T$ such that the induced morphism $f_{T}=\mathrm{p}r_{T}:X_{T}=X\times_{S_{*}}T\rightarrow T$
factors as 
\[
f_{T}=h\circ\rho:X_{T}\stackrel{\rho}{\longrightarrow}Y\stackrel{h}{\longrightarrow}T,
\]
where $\rho:X_{T}\rightarrow Y$ is an $\mathbb{A}^{1}$-fibration. 
\end{thm*}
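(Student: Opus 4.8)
The plan is to reduce everything to the geometric generic fiber of $f$ and to argue by spreading out. After restricting to a dense open subset of $S$ and treating each irreducible component separately, one may assume that $f\colon X\to S$ is smooth with geometrically integral fibers, all of which are smooth $\mathbb{A}^{1}$-ruled affine surfaces, and that $f$ admits a relative smooth projective completion $(\overline{X},D)\to S$ with $\overline{X}\to S$ smooth projective and $D$ a relative SNC divisor restricting on each fiber to the boundary of a log-smooth completion $(\overline{X}_{s},D_{s})$ of $X_{s}$. Write $K=k(S)$ for the function field and $X_{\eta}$ for the generic fiber, a smooth geometrically integral affine surface over $K$. It then suffices to produce a finite separable extension $L/K$ together with an $\mathbb{A}^{1}$-fibration $\rho_{L}\colon X_{\eta}\times_{K}L\to C_{L}$ defined over $L$: spreading this datum out over a suitable dense open $S_{*}\subset S$ yields a finite \'etale cover $T\to S_{*}$ (in characteristic zero $L/K$ is automatically separable, hence generically \'etale) together with a normal $T$-scheme $Y\to T$ and an $\mathbb{A}^{1}$-fibration $\rho\colon X_{T}\to Y$ factoring $f_{T}$.

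The heart of the matter is to show that the geometric generic fiber $X_{\overline{\eta}}=X_{\eta}\times_{K}\overline{K}$ is $\mathbb{A}^{1}$-ruled. By Miyanishi--Sugie \cite{MS80} this is equivalent to $\overline{\kappa}(X_{\overline{\eta}})=-\infty$, and since log Kodaira dimension is insensitive to the flat base field extension $\overline{K}/K$ (log plurigenera commute with it), it is equivalent to $\overline{\kappa}(X_{\eta})=-\infty$. I would read off the latter from the relative completion: for every integer $m\geq 1$ the function $s\mapsto P_{m}(s)=h^{0}(\overline{X}_{s},m(K_{\overline{X}_{s}}+D_{s}))$ is upper semicontinuous on $S$, and its value at the generic point is the generic, minimal, one. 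The hypothesis that every closed fiber over our dense open is $\mathbb{A}^{1}$-ruled gives $\overline{\kappa}(X_{s})=-\infty$, hence $P_{m}(s)=0$, for all these $s$; comparing with the dense open on which $P_{m}$ attains its generic value forces $P_{m}(\eta)=0$ for all $m$, that is $\overline{\kappa}(X_{\eta})=-\infty$. Applying Miyanishi--Sugie over the algebraically closed field $\overline{K}$ then furnishes an $\mathbb{A}^{1}$-fibration $\overline{\rho}\colon X_{\overline{\eta}}\to \overline{C}$.

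It remains to descend $\overline{\rho}$ to a finite extension of $K$, which is comparatively formal. The curve $\overline{C}$ and the morphism $\overline{\rho}$ are given by finitely many data with coefficients in $\overline{K}$; letting $L$ be the finite subextension of $\overline{K}/K$ generated by these coefficients, one obtains a curve $C_{L}$ over $L$ and a morphism $\rho_{L}\colon X_{\eta}\times_{K}L\to C_{L}$ whose base change to $\overline{K}$ is $\overline{\rho}$. Its generic fiber is a smooth affine curve over $L(C_{L})$ that becomes $\mathbb{A}^{1}$ after base change to the algebraic closure; as $\mathbb{A}^{1}$ admits no nontrivial forms over a field of characteristic zero, this generic fiber is already $\mathbb{A}^{1}_{L(C_{L})}$, so $\rho_{L}$ is a genuine $\mathbb{A}^{1}$-fibration over $L$. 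Spreading $C_{L}$, $\rho_{L}$ and the factorization $X_{\eta}\times_{K}L\to C_{L}\to\mathrm{Spec}(L)$ out over a dense open $S_{*}\subset S$, replacing $T$ by the \'etale locus of the normalization of $S_{*}$ in $L$ and $Y$ by a normal model of $C_{L}$ over $T$, and shrinking once more so that $\rho$ is fibrewise an $\mathbb{A}^{1}$-fibration, gives the desired factorization $f_{T}=h\circ\rho$.

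I expect the genuine obstacle to be the second step, the control of log Kodaira dimension in the family: one must arrange a relative completion that is log smooth over a dense open of $S$ and for which adjunction identifies $m(K_{\overline{X}/S}+D)|_{\overline{X}_{s}}$ with $m(K_{\overline{X}_{s}}+D_{s})$, so that semicontinuity of log plurigenera applies and transports $\overline{\kappa}=-\infty$ from the general closed fibers to the generic one. By contrast the descent and spreading-out steps are soft, relying only on finiteness of the field of definition of $\overline{\rho}$, separability in characteristic zero, and the rigidity of $\mathbb{A}^{1}$.
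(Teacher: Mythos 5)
Your proposal is correct, and its endgame coincides with the paper's: the descent of the geometric $\mathbb{A}^{1}$-fibration to a finite subextension $L$ of $\overline{K}/K$ using finiteness of the defining data plus the absence of nontrivial forms of $\mathbb{A}^{1}$ is exactly the proof of Theorem \ref{prop:Surface-etale-cylinder} (via Example \ref{Exa:forms-affine-line}), and your spreading-out of $\rho_{L}$ over the \'etale locus of the normalization of $S_{*}$ in $L$ is the paper's Lemma \ref{lem:Generic-to-openset} combined with the normalization step in the proof of Theorem \ref{thm:MainTh-Family-Etale-cylinder}. Where you genuinely diverge is the central step, transporting $\kappa=-\infty$ from the general closed fibers to the generic fiber. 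You do this by upper semicontinuity of log plurigenera $P_{m}(s)=h^{0}\bigl(\overline{X}_{s},\omega_{\overline{X}_{s}}(\log D_{s})^{\otimes m}\bigr)$ in a relative log smooth completion: since $\{P_{m}\geq 1\}$ is closed and misses a dense open set of closed points, it misses $\eta$, so $P_{m}(\eta)=0$ for all $m$; note this is the easy direction of semicontinuity (generic value is minimal), so no deep invariance-of-plurigenera input is needed, only the standard char-$0$ facts that a relative SNC completion exists over a dense open of $S$ and that $\omega_{\overline{X}/S}(\log D)$ restricts on fibers to $\omega_{\overline{X}_{s}}(\log D_{s})$. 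The paper instead uses a Lefschetz-principle argument: it descends $f$ to a model $f_{0}:X_{0}\rightarrow S_{0}$ over a subfield $k_{0}\subset\mathbb{C}$ of finite transcendence degree over $\mathbb{Q}$, embeds $K_{0}=\mathrm{Frac}(\Gamma(S_{0},\mathcal{O}_{S_{0}}))$ back into $\mathbb{C}$, thereby realizing a model of the generic fiber as an actual closed fiber $X_{s}$, and concludes by invariance of $\kappa$ under arbitrary field extensions (paragraph \ref{par:Kod-dim}). The trade-off: the paper's route needs no relative projective or semicontinuity machinery at all, but it crucially uses that the base field has infinite transcendence degree over $\mathbb{Q}$ (the stated hypothesis of Theorem \ref{thm:MainTh-Family-Etale-cylinder}); your route requires setting up the relative log smooth completion, but in exchange works over an arbitrary field of characteristic zero, a level of generality the paper itself only reaches in the irrational case (Theorem \ref{thm:Mrc-fibration}) by the quite different device of MRC-fibrations.
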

In contrast with the log-deformation theoretic strategy used in \cite{GMM14},
which involves the study of certain Hilbert schemes of rational curves
on well-chosen relative normal projective models $\overline{f}:(\overline{X},B)\rightarrow S$
of $X$ over $S$, our approach is more elementary, based on the notion
of Kodaira dimension \cite{Ii77} adapted to the case of geometrically
connected varieties defined over arbitrary base fields of characteristic
zero. Indeed, the hypothesis means equivalently that the general fibers
of $f$ have negative Kodaira dimension. This property is in turn
inherited by the generic fiber of $f$, which is a smooth affine surface
defined over the function field of $S$, thanks to a standard Lefschetz
principle argument. Then we are left with checking that a smooth affine
surface $X$ defined over an arbitrary base field $k$ of characteristic
zero and with negative Kodaira dimension admits an $\mathbb{A}^{1}$-fibration,
possibly after a suitable finite base extension $\mathrm{Spec}(k_{0})\rightarrow\mathrm{Spec}(k)$,
a fact which follows immediately from finite type hypotheses and the
aforementioned characterization of Miyanishi-Sugie \cite{MS80}. \\

The article is organized as follows. The first section contains a
review of the structure of smooth affine surfaces of negative Kodaira
dimension over arbitrary base fields $k$ of characteristic zero.
We show in particular that every such surface $X$ admits an $\mathbb{A}^{1}$-fibration
after a finite extension of the base field $k$, and we give criteria
for the existence of $\mathbb{A}^{1}$-fibrations defined over $k$.
These results are then applied in the second section to the study
of deformations $f:X\rightarrow S$ of smooth $\mathbb{A}^{1}$-ruled
affine surfaces: after giving the proof of the main result, Theorem
\ref{thm:MainTh-Family-Etale-cylinder}, we consider in more detail
the particular situation where the general fibers of $f:X\rightarrow S$
are irrational. In this case, after shrinking $S$ if necessary, we
show that the morphism $f$ actually factors through an $\mathbb{A}^{1}$-fibration
$\rho:X\rightarrow Y$ over an $S$-scheme $h:Y\rightarrow S$ which
coincides, up to birational equivalence, with the Maximally Rationally
Connected quotient of a relative smooth projective model $\overline{f}:\overline{X}\rightarrow S$
of $X$ over $S$. The last section is devoted to the case of affine
threefolds equipped with a fibration $f:X\rightarrow B$ by irrational
$\mathbb{A}^{1}$-ruled surfaces over a smooth base curve $B$: we
explain in particular how to construct an $\mathbb{A}^{1}$-fibration
$\rho:X\rightarrow Y$ factoring $f$ by means of a relative Minimal
Model Program applied to a smooth projective model $\overline{f}:\overline{X}\rightarrow B$
of $X$ over $B$.

\section{$\mathbb{A}^{1}$-ruledness of affine surfaces over non closed field }

\subsection{Logarithmic Kodaira dimension }

\begin{parn} \label{par:Kod-dim} Let $X$ be a smooth geometrically
connected algebraic variety defined over a field $k$ of characteristic
zero. By virtue of Nagata compactification \cite{Na62} and Hironaka
desingularization \cite{Hi64} theorems, there exists an open immersion
$X\hookrightarrow(\overline{X},B)$ into a smooth complete algebraic
variety $\overline{X}$ with reduced SNC boundary divisor $B=\overline{X}\setminus X$.
The (logarithmic) Kodaira dimension $\kappa(X)$ of $X$ is then defined
as the Iitaka dimension \cite{Ii70} of the pair $(\overline{X};\omega_{\overline{X}}(\log B))$
where $\omega_{\overline{X}}(\log B)=(\det\Omega_{\overline{X}/k}^{1})\otimes\mathcal{O}_{\overline{X}}(B)$.
So letting $\mathcal{R}(\overline{X},B)=\bigoplus_{m\geq0}H^{0}(\overline{X},\omega_{\overline{X}}(\log B)^{\otimes m})$,
we have $\kappa(X)=\mathrm{tr}.\deg_{k}\mathcal{R}(\overline{X},B)-1$
if $H^{0}(\overline{X},\omega_{\overline{X}}(\log B)^{\otimes m})\neq0$
for sufficiently large $m$. Otherwise, if $H^{0}(\overline{X},\omega_{\overline{X}}(\log B)^{\otimes m})=0$
for every $m\geq1$, we set by convention $\kappa(X)=-\infty$ and
we say that $\kappa(X)$ is negative. The so-defined element $\kappa(X)\in\left\{ 0,\ldots,\mathrm{dim}_{k}X\right\} \cup\{-\infty\}$
is independent of the choice of a smooth complete model $(\overline{X},B)$
\cite{Ii77}. 

Furthermore, the Kodaira dimension of $X$ is invariant under arbitrary
extensions of the base field $k$. Indeed, given an extension $k\subset k'$,
the pair $(\overline{X}_{k'},B_{k'})$ obtained by the base change
$\mathrm{Spec}(k')\rightarrow\mathrm{Spec}(k)$ is a smooth complete
model of $X_{k'}=X\times_{\mathrm{Spec}(k)}\mathrm{Spec}(k')$ with
reduced SNC boundary $B_{k'}$. Furthermore letting $\pi:\overline{X}_{k'}\rightarrow\overline{X}$
be the corresponding faithfully flat morphism, we have $\omega_{\overline{X}_{k'}}(\log B_{k'})\simeq\pi^{*}\omega_{X}(\log B)$
and so $\mathcal{R}(X_{k'})\simeq\mathcal{R}(X)\otimes_{k}k'$ by
the flat base change theorem \cite[Proposition III.9.3]{Ha77}. Thus
$\kappa(X)=\kappa(X_{k'})$. 

\end{parn}
\begin{example}
\label{Exa:forms-affine-line} The affine line $\mathbb{A}_{k}^{1}$
is the only smooth geometrically connected non complete curve $C$
with negative Kodaira dimension. Indeed, let $\overline{C}$ be a
smooth projective model of $C$ and let $\overline{C}_{\overline{k}}$
be the curve obtained by the base change to an algebraic closure $\overline{k}$
of $k$. Since $C$ is non complete, $B=\overline{C}_{\overline{k}}\setminus C_{\overline{k}}$
consists of a finite collection of closed points $p_{1},\ldots p_{s}$,
$s\geq1$, on which the Galois group $\mathrm{Gal}(\overline{k}/k)$
acts by $k$-automorphisms of $\overline{C}_{\overline{k}}$. Clearly,
$H^{0}(\overline{C}_{\overline{k}},\omega_{\bar{C}_{\overline{k}}}(\log B)^{\otimes m})\neq0$
unless $\overline{C}_{\overline{k}}\simeq\mathbb{P}_{\overline{k}}^{1}$
and $s=1$. Since $p_{1}$ is then necessarily $\mathrm{Gal}(\overline{k}/k)$-invariant,
$\overline{C}\setminus C$ consists of unique $k$-rational point,
showing that $\overline{C}\simeq\mathbb{P}_{k}^{1}$ and $C\simeq\mathbb{A}_{k}^{1}$. 
\end{example}

\subsection{Smooth affine surfaces with negative Kodaira dimension}

\indent\newline\indent Recall that by virtue of \cite{MS80}, a smooth
affine surface $X$ defined over an algebraically closed field of
characteristic zero has negative Kodaira dimension if and only if
it is $\mathbb{A}^{1}$-ruled: there exists a Zariski dense open subset
$U\subset X$ of the form $U\simeq Z\times\mathbb{A}^{1}$ for a suitable
smooth curve $Z$. In fact, the projection $\mathrm{pr}_{Z}:U\simeq Z\times\mathbb{A}^{1}\rightarrow Z$
always extends to an $\mathbb{A}^{1}$-fibration $\rho:X\rightarrow C$
over an open subset $C$ of a smooth projective model $\overline{Z}$
of $Z$. This characterization admits the following straightforward
generalization to arbitrary base fields of characteristic zero: 
\begin{thm}
\label{prop:Surface-etale-cylinder} Let $X$ be a smooth geometrically
connected affine surface defined over a field $k$ of characteristic
zero. Then the following are equivalent:

a) The Kodaira dimension $\kappa(X)$ of $X$ is negative.

b) For some finite extension $k_{0}$ of $k$, the surface $X_{k_{0}}$
contains an open subset $U\simeq Z\times\mathbb{A}_{k_{0}}^{1}$ for
some smooth curve $Z$ defined over $k_{0}$.

c) There exists a finite extension $k_{0}$ of $k$ and an $\mathbb{A}^{1}$-fibration
$\rho:X_{k_{0}}\rightarrow C_{0}$ over a smooth curve $C_{0}$ defined
over $k_{0}$. \end{thm}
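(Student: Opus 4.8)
The plan is to reduce everything to the Miyanishi--Sugie characterization \cite{MS80} over an algebraic closure $\overline{k}$ of $k$, combined with the field-extension invariance of the Kodaira dimension established in \ref{par:Kod-dim} and a descent argument to a finite subextension. Concretely, I would establish the implications in the order $(b)\Rightarrow(a)$, then $(b)\Leftrightarrow(c)$, and finally $(a)\Rightarrow(b)$, the last being the only substantial one. The first two are formal consequences of standard facts about log Kodaira dimension and generic triviality, while the third is where the geometry of surfaces and the descent of the $\mathbb{A}^{1}$-ruling enter.

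For $(b)\Rightarrow(a)$, suppose $X_{k_{0}}$ contains a dense open $U\simeq Z\times\mathbb{A}_{k_{0}}^{1}$. Choosing a smooth projective completion $\overline{Z}$ of $Z$ and resolving, one gets a smooth complete model of $Z\times\mathbb{A}^{1}$ whose log canonical sheaf restricts to $\omega_{\mathbb{P}^{1}}(\log\{\infty\})\simeq\mathcal{O}_{\mathbb{P}^{1}}(-1)$ on a general $\mathbb{P}^{1}$-fiber; any log pluricanonical form would then restrict to a section of a negative-degree line bundle on the general fiber, hence vanish, so $\kappa(Z\times\mathbb{A}^{1})=-\infty$ (equivalently, use $\kappa(\mathbb{A}^{1})=-\infty$ from Example \ref{Exa:forms-affine-line}). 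Monotonicity of the log Kodaira dimension under the open immersion $U\hookrightarrow X_{k_{0}}$ gives $\kappa(X_{k_{0}})\leq\kappa(U)=-\infty$, and invariance under the finite extension $k\subset k_{0}$ from \ref{par:Kod-dim} yields $\kappa(X)=\kappa(X_{k_{0}})=-\infty$. For $(b)\Leftrightarrow(c)$: given $U\simeq Z\times\mathbb{A}_{k_{0}}^{1}$, the first projection is an $\mathbb{A}^{1}$-fibration on $U$, which I would extend to $X_{k_{0}}$ over an open subset $C_{0}$ of a smooth projective model of $Z$ exactly as recalled before the statement for the algebraically closed case, the construction being insensitive to the ground field. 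Conversely, an $\mathbb{A}^{1}$-fibration $\rho\colon X_{k_{0}}\rightarrow C_{0}$ has generic fiber $\mathbb{A}_{k(C_{0})}^{1}$; a coordinate $t$ on it is a rational function on $X_{k_{0}}$, regular along the generic fiber and hence over $\rho^{-1}(Z)$ for some dense open $Z\subset C_{0}$, and $(\rho,t)$ realizes an isomorphism $\rho^{-1}(Z)\simeq Z\times\mathbb{A}_{k_{0}}^{1}$ after possibly shrinking $Z$.

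The heart of the proof is $(a)\Rightarrow(b)$. Starting from $\kappa(X)<0$, invariance of the Kodaira dimension under the extension $k\subset\overline{k}$ (\ref{par:Kod-dim}) gives $\kappa(X_{\overline{k}})<0$, so by \cite{MS80} the surface $X_{\overline{k}}$ carries a dense open subset $\overline{U}\simeq\overline{Z}\times\mathbb{A}_{\overline{k}}^{1}$ for a smooth curve $\overline{Z}$ over $\overline{k}$. The remaining task, and the main obstacle, is to descend this datum to a finite extension of $k$: writing $\overline{k}$ as the filtered union of the finite extensions $k_{0}/k$ contained in it, the smooth curve $\overline{Z}$, the open immersion $\overline{U}\hookrightarrow X_{\overline{k}}$ and the product isomorphism $\overline{U}\simeq\overline{Z}\times\mathbb{A}_{\overline{k}}^{1}$ are all data of finite presentation between $\overline{k}$-schemes of finite type obtained by base change from $X$. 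By standard spreading-out arguments they are therefore already defined over some finite subextension $k_{0}$, producing a smooth curve $Z$ over $k_{0}$ and a dense open $U\subset X_{k_{0}}$ with $U\simeq Z\times\mathbb{A}_{k_{0}}^{1}$. The only delicate point is to ensure that the chosen model descends with all the relevant properties---smoothness of $Z$ and the product structure of $U$---which holds because these are finite-type conditions stable under the faithfully flat base change $\mathrm{Spec}(\overline{k})\rightarrow\mathrm{Spec}(k_{0})$; this is precisely the finite-type input alluded to in the introduction, and it is what turns the a priori transcendental Miyanishi--Sugie ruling over $\overline{k}$ into one defined over a finite extension.
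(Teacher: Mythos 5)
Your core mechanism is the same as the paper's: invariance of the log Kodaira dimension under base field extension, the Miyanishi--Sugie theorem applied over $\overline{k}$, and a spreading-out argument along $\overline{k}=\varinjlim k_{0}$ to descend to a finite subextension. Your implications b) $\Rightarrow$ a) (monotonicity under open immersions plus $\kappa(Z\times\mathbb{A}^{1})=-\infty$) and a) $\Rightarrow$ b) (descend the open subset, the curve, and the product isomorphism) are correct; the only structural difference is which datum gets descended. The paper proves a) $\Rightarrow$ c) directly, descending the fibration $q:X_{\overline{k}}\rightarrow\overline{C}$ itself, so that it only needs the easy implications c) $\Rightarrow$ b) $\Rightarrow$ a); your decomposition instead forces you to prove b) $\Rightarrow$ c).

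That step is where you have a genuine gap. You extend $\mathrm{pr}_{Z}:U\rightarrow Z$ to an $\mathbb{A}^{1}$-fibration on all of $X_{k_{0}}$ by asserting that the construction recalled before the statement is ``insensitive to the ground field''. But that recollection belongs to the Miyanishi--Sugie theory over an \emph{algebraically closed} field, and its insensitivity to the ground field is exactly the point requiring proof: over a non-closed $k_{0}$ the closed fibers of a fibration are a priori only forms of $\mathbb{A}^{1}$ over varying residue fields, and the base curve may have no $k_{0}$-rational points at all -- the whole content of Example \ref{Ex:A1-fib-extension} is that such field-sensitivity is real. The implication b) $\Rightarrow$ c) is true, and it can be repaired with tools you already deploy elsewhere: base change to $\overline{k}$ and extend there; observe that the indeterminacy locus of the rational map $X_{k_{0}}\dashrightarrow\overline{Z}$ is empty because it becomes empty after the faithfully flat base change, and likewise that the image is open; and finally identify the generic fiber of the resulting morphism -- a priori only a smooth geometrically connected non-complete curve over $k_{0}(Z)$ that becomes $\mathbb{A}^{1}$ after extension of scalars -- with $\mathbb{A}_{k_{0}(Z)}^{1}$ itself. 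This last identification is the non-formal ingredient: it needs the invariance of $\kappa$ from \ref{par:Kod-dim} together with the triviality of forms of the affine line, i.e.\ Example \ref{Exa:forms-affine-line}, which is precisely what the paper invokes at the corresponding moment of its proof of a) $\Rightarrow$ c) and which appears nowhere in your proposal. (Alternatively, one further round of spreading out over $\overline{k}(\overline{Z})=\varinjlim k_{1}(\overline{Z}_{1})$, enlarging the finite extension once more -- which c) permits -- would also close the gap.)
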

\begin{proof}
Clearly c) implies b) and b) implies a). To show that a) implies c),
we observe that letting $\overline{k}$ be an algebraic closure of
$k$, we have $\kappa(X_{\overline{k}})=\kappa(X)<0$. It then follows
from the aforementioned result of Miyanishi-Sugie \cite{MS80} that
$X_{\overline{k}}$ admits an $\mathbb{A}^{1}$-fibration $q:X_{\overline{k}}\rightarrow C$
over a smooth curve $C$, with smooth projective model $\overline{C}$.
Since $X_{\overline{k}}$ and $\overline{C}$ are of finite type over
$\overline{k}$, there exists a finite extension $k\subset k_{0}$
such that $q:X_{\overline{k}}\rightarrow\overline{C}$ is obtained
from a morphism $\rho:X_{k_{0}}\rightarrow\overline{C}_{0}$ to a
smooth projective curve $\overline{C}_{0}$ defined over $k_{0}$
by the base extension $\mathrm{\mathrm{Spec}}(\overline{k})\rightarrow\mathrm{Spec}(k_{0})$.
By virtue of Example \ref{Exa:forms-affine-line}, $\rho:X_{k_{0}}\rightarrow\overline{C}_{0}$
is an $\mathbb{A}^{1}$-fibration. 
\end{proof}
\noindent Examples of smooth affine surfaces $X$ of negative Kodaira
dimension without any $\mathbb{A}^{1}$-fibration defined over the
base field but admitting $\mathbb{A}^{1}$-fibrations of complete
type after a finite base extension were already constructed in \cite{DK14}.
The following example illustrates the fact that a similar phenomenon
occurs for $\mathbb{A}^{1}$-fibrations of affine type, providing
in particular a negative answer to Problem 3.13 in \cite{GMM14}. 
\begin{example}
\label{Ex:A1-fib-extension} Let $B\subset\mathbb{P}_{k}^{2}=\mathrm{Proj}(k[x,y,z])$
be a smooth conic without $k$-rational points defined by a quadratic
form $q=x^{2}+ay^{2}+bz^{2}$, where $a,b\in k^{*}$, and let $\overline{X}\subset\mathbb{P}_{k}^{3}=\mathrm{Proj}(k[x,y,z,t])$
be the smooth quadric surface defined by the equation $q(x,y,z)-t^{2}=0$.
The complement $X\subset\overline{X}$ of the hyperplane section $\left\{ t=0\right\} $
is a $k$-rational smooth affine surface with $\kappa(X)<0$, which
does not admit any $\mathbb{A}^{1}$-fibration $\rho:X\rightarrow C$
over a smooth, affine or projective curve $C$. Indeed, if such a
fibration existed then a smooth projective model of $C$ would be
isomorphic to $\mathbb{P}_{k}^{1}$; since the fiber of $\rho$ over
a general $k$-rational point of $C$ is isomorphic to $\mathbb{A}_{k}^{1}$,
its closure in $\overline{X}$ would intersect the boundary $\overline{X}\setminus X\simeq B$
in a unique point, necessarily $k$-rational, in contradiction with
the choice of $B$. 

In constrast, for a suitable finite extension $k\subset k'$, the
surface $X_{k'}$ becomes isomorphic to the complement of the diagonal
in $\overline{X}_{k'}\simeq\mathbb{P}_{k'}^{1}\times\mathbb{P}_{k'}^{1}$
and hence, it admits at least two distinct $\mathbb{A}^{1}$-fibrations
over $\mathbb{P}_{k'}^{1}$, induced by the restriction of the first
and second projections from $\overline{X}_{k'}$. Furthermore, since
$X_{k'}$ is isomorphic to the smooth affine quadric in $\mathbb{A}_{k'}^{3}=\mathrm{Spec}(k'[u,v,w])$
with equation $uv-w^{2}=1$, it also admits two distinct $\mathbb{A}^{1}$-fibrations
over $\mathbb{A}_{k'}^{1}$, induced by the restrictions of the projections
$\mathrm{pr}_{u}$ and $\mathrm{pr}_{v}$. 
\end{example}

\subsection{Existence of $\mathbb{A}^{1}$-fibrations defined over the base field }

\begin{parn} \label{par:galois-descent} The previous example illustrates
the general fact that if $X$ is a smooth geometrically connected
affine surface with $\kappa(X)<0$ which does not admit any $\mathbb{A}^{1}$-fibration,
then there exists a finite extension $k'$ of $k$ such that $X_{k'}$
admits at least two $\mathbb{A}^{1}$-fibrations of the same type,
either affine or complete, with distinct general fibers. Indeed, by
virtue of Theorem \ref{prop:Surface-etale-cylinder}, there exists
a finite extension $k_{0}$ of $k$ such that $X_{k_{0}}$ admits
an $\mathbb{A}^{1}$-fibration $\rho:X_{k_{0}}\rightarrow C$. Let
$k'$ be the Galois closure of $k_{0}$ in an algebraic closure of
$k$ and let $\rho_{k'}:X_{k'}\rightarrow C_{k'}$ be the $\mathbb{A}^{1}$-fibration
deduced from $\rho$. If $\rho_{k'}:X_{k'}\rightarrow C_{k'}$ is
globally invariant under the action of the Galois group $\mathrm{Gal}(k'/k)$
on $X_{k'}$, in the sense that for every $\Phi\in\mathrm{Gal}(k'/k)$
considered as a Galois automorphism of $X_{k'}$ there exists a commutative
diagram 
\begin{eqnarray*}
X_{k'} & \stackrel{\Phi}{\rightarrow} & X_{k'}\\
\rho_{k'}\downarrow &  & \downarrow\rho_{k'}\\
C_{k'} & \stackrel{\varphi}{\rightarrow} & C_{k'}
\end{eqnarray*}
for a certain $k'$-automorphism $\varphi$ of $C_{k'}$, then we
would obtain a Galois action of $\mathrm{Gal}(k'/k)$ on $C_{k'}$
for which $\rho_{k'}:X_{k'}\rightarrow C_{k'}$ becomes an equivariant
morphism. Since $C_{k'}$ is quasi-projective and $\rho_{k}'$ is
affine, it would follow from Galois descent that there exists a curve
$\tilde{C}$ defined over $k$ and a morphism $q:X\rightarrow\tilde{C}$
defined over $k$ such that $\rho_{k'}:X_{k'}\rightarrow C_{k'}$
is obtained from $q$ by the base change $\mathrm{Spec}(k')\rightarrow\mathrm{Spec}(k)$.
Since by virtue of Example \ref{Exa:forms-affine-line} the affine
line does not have any nontrivial form, the generic fiber of $q$
would be isomorphic to the affine line over the field of rational
functions of $\tilde{C}$ and so, $q:X\rightarrow\tilde{C}$ would
be an $\mathbb{A}^{1}$-fibration defined over $k$, in contradiction
with our hypothesis. So there exists at least an element $\Phi\in\mathrm{Gal}(k'/k)$
considered as a $k$-automorphism of $X_{k'}$ such that the $\mathbb{A}^{1}$-fibrations
$\rho_{k'}:X_{k'}\rightarrow C_{k'}$ and $\rho_{k'}\circ\varphi:X_{k'}\rightarrow C_{k'}$
have distinct general fibers. 

\end{parn}

\noindent Arguing backward, we obtain the following useful criterion: 
\begin{prop}
Let $X$ be a smooth geometrically connected affine surface with $\kappa(X)<0$.
If there exists a finite Galois extension $k'$ of $k$ such that
$X_{k'}$ admits a unique $\mathbb{A}^{1}$-fibration $\rho':X_{k'}\rightarrow C_{k'}$
up to composition by automorphisms of $C_{k'}$, then $\rho':X_{k'}\rightarrow C_{k'}$
is obtained by base extension from an $\mathbb{A}^{1}$-fibration
$\rho:X\rightarrow C$ defined over $k$. 
\end{prop}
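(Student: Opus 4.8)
The plan is to run the argument of the paragraph \ref{par:galois-descent} in reverse. There it was shown that if $\rho'=\rho_{k'}$ fails to be globally invariant under the Galois action, then $X_{k'}$ carries a second $\mathbb{A}^{1}$-fibration with distinct general fibers; the uniqueness hypothesis rules this out, so $\rho'$ must be Galois invariant, and the conclusion then follows from the Galois descent already performed in \ref{par:galois-descent}.

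Concretely, first I would fix $\Phi\in\mathrm{Gal}(k'/k)$, viewed as a $k$-automorphism of $X_{k'}$ lying over $\mathrm{Spec}(\Phi)$, and consider the $\mathbb{A}^{1}$-fibration deduced from $\rho'$ by conjugation by $\Phi$: its fibers are the $\Phi$-images of the fibers of $\rho'$, and it is again an $\mathbb{A}^{1}$-fibration on $X_{k'}$. By the uniqueness hypothesis it coincides with $\rho'$ up to an automorphism of the base, which provides an automorphism $\varphi_{\Phi}$ of $C_{k'}$ (lying over $\mathrm{Spec}(\Phi)$) fitting into the commutative square $\rho'\circ\Phi=\varphi_{\Phi}\circ\rho'$ of \ref{par:galois-descent}.

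The main point, and the step I expect to be the real content, is to check that $\Phi\mapsto\varphi_{\Phi}$ is a genuine (semilinear) Galois action on $C_{k'}$, so that $\rho'$ becomes equivariant. This is where the hypothesis is used essentially: since $\rho'$ is surjective, the automorphism $\varphi_{\Phi}$ completing the square for a given $\Phi$ is \emph{unique}; consequently $\varphi_{\Phi\Psi}$ and $\varphi_{\Phi}\circ\varphi_{\Psi}$, both of which make the square for $\Phi\Psi$ commute, must agree, and $\varphi_{\mathrm{id}}=\mathrm{id}$. Thus $(\varphi_{\Phi})_{\Phi}$ is a descent datum on $C_{k'}$.

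It then remains to descend. As $C_{k'}$ is quasi-projective and $\rho'$ is affine, Galois descent applies as in \ref{par:galois-descent}, yielding a curve $C$ defined over $k$ and a morphism $\rho:X\rightarrow C$ over $k$ with $\rho_{k'}=\rho'$. Finally, the generic fiber of $\rho$ is a smooth geometrically connected affine curve over $k(C)$ whose base change to $k'(C_{k'})$ is the affine line; being a form of $\mathbb{A}^{1}$ it has negative Kodaira dimension, so by Example \ref{Exa:forms-affine-line} it is isomorphic to $\mathbb{A}_{k(C)}^{1}$. Hence $\rho:X\rightarrow C$ is an $\mathbb{A}^{1}$-fibration defined over $k$ from which $\rho'$ is recovered by base extension, as desired.
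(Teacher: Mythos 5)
Your proof is correct and takes essentially the same route as the paper: the Proposition is given there without a separate proof, being the result of ``arguing backward'' through the Galois-descent discussion of \ref{par:galois-descent}, which is exactly what you carry out (uniqueness forces Galois invariance of $\rho'$, then quasi-projectivity of $C_{k'}$ and affineness of $\rho'$ give descent, and Example \ref{Exa:forms-affine-line} upgrades the descended morphism to an $\mathbb{A}^{1}$-fibration over $k$). Your explicit check of the cocycle condition for $\Phi\mapsto\varphi_{\Phi}$, via uniqueness of the automorphism completing the square because $\rho'$ is surjective, is a detail the paper leaves implicit but does not constitute a different approach.
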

\noindent
\begin{cor}
\label{cor:Irrational-fibration} A smooth geometrically connected
irrational affine surface $X$ has negative Kodaira dimension if and
only if it admits an $\mathbb{A}^{1}$-fibration $\rho:X\rightarrow C$
over a smooth irrational curve $C$ defined over the base field $k$.
Furthermore for every extension $k'$ of $k$, $\rho_{k'}:X_{k'}\rightarrow C_{k'}$
is the unique $\mathbb{A}^{1}$-fibration on $X_{k'}$ up to composition
by automorphisms of $C_{k'}$. \end{cor}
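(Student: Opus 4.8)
The plan is to establish both the existence of an $\mathbb{A}^{1}$-fibration defined over $k$ and its uniqueness, using the preceding Proposition together with the geometric structure theory for $\mathbb{A}^{1}$-fibered surfaces. First I would observe that one implication is immediate: if $X$ admits an $\mathbb{A}^{1}$-fibration $\rho:X\rightarrow C$ over any smooth curve $C$, then by the equivalence in Theorem \ref{prop:Surface-etale-cylinder} (specifically, c) implies a)) the surface has negative Kodaira dimension. The substance of the corollary lies in the reverse direction and in the uniqueness claim, both of which exploit the hypothesis that $X$ is \emph{irrational}.

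For the forward direction, assume $\kappa(X)<0$. By Theorem \ref{prop:Surface-etale-cylinder}, after passing to a finite Galois extension $k'/k$ the surface $X_{k'}$ carries an $\mathbb{A}^{1}$-fibration $\rho':X_{k'}\rightarrow C_{k'}$. The key geometric input I would invoke is that for a fibration of this type, the base curve $C_{k'}$ is birationally dominated by $X_{k'}$, so its function field embeds into that of $X_{k'}$; since $X$ is irrational, the base of any $\mathbb{A}^{1}$-fibration must itself be an irrational curve (a rational base would force $X_{k'}$, fibered by affine lines over a rational curve, to be rational). Now I would use the structural fact that an irrational smooth affine surface admits at most one $\mathbb{A}^{1}$-fibration up to composition with automorphisms of the base: two distinct $\mathbb{A}^{1}$-fibrations with distinct general fibers would produce a pencil of affine lines in each of two directions, forcing rationality of the surface. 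This uniqueness over $k'$ is precisely the hypothesis needed to apply the Proposition, which then yields an $\mathbb{A}^{1}$-fibration $\rho:X\rightarrow C$ defined over $k$; irrationality of $X$ forces $C$ to be irrational as well.

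The uniqueness statement over arbitrary extensions $k'$ follows from the same geometric principle applied after base change: since $X_{k'}$ remains an irrational affine surface (irrationality is preserved under field extension for a geometrically connected surface, and the genus of the base curve is a birational invariant insensitive to the extension), the argument above shows $X_{k'}$ cannot carry two $\mathbb{A}^{1}$-fibrations with distinct general fibers. Hence $\rho_{k'}$ is the unique such fibration up to composition with automorphisms of $C_{k'}$.

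I expect the main obstacle to be making rigorous the assertion that an irrational affine surface admits a unique $\mathbb{A}^{1}$-fibration. Over an algebraically closed field this is classical (any two $\mathbb{A}^{1}$-fibrations on a non-rational surface share their general fibers, since transversal pencils of rational curves force rationality via the structure of the relatively minimal model), but I would need to confirm that the irrationality hypothesis transfers correctly between $X$, its base change $X_{\overline{k}}$, and the intermediate $X_{k'}$, and that the base curves inherit irrationality from $X$ rather than merely being rational curves over a non-closed field. The delicate point is distinguishing genuine irrationality of the base from the phenomenon in Example \ref{Ex:A1-fib-extension}, where a rational surface acquires extra $\mathbb{A}^{1}$-fibrations after extension; the irrationality hypothesis is exactly what rules this out.
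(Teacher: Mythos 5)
Your proposal is correct and follows essentially the same route as the paper: existence of a fibration over a finite Galois extension via Theorem \ref{prop:Surface-etale-cylinder}, irrationality forcing the base curve to be irrational and hence the fibration to be unique, then descent to $k$ via the Proposition; and uniqueness because a second fibration would make the base dominated by a rational fiber curve, forcing rationality of $X$. The only cosmetic difference is that you justify the uniqueness fact by appeal to relatively minimal models, where the paper uses the more direct L\"uroth-type domination argument, and your closing caveat about geometric versus $k$-irrationality matches the convention the paper itself implicitly adopts.
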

\begin{proof}
Uniqueness is clear since otherwise $C_{k'}$ would be dominated by
a general fiber of another $\mathbb{A}^{1}$-fibration on $X_{k'}$,
and hence would be rational, implying in turn the rationality of $X$.
By virtue of Theorem \ref{prop:Surface-etale-cylinder}, there exists
a finite Galois extension $k'$ of $k$ and an $\mathbb{A}^{1}$-fibration
$\rho':X_{k'}\rightarrow C'$ over a smooth curve $C'$. The latter
is irrational as $X$ is irrational, which implies that $\rho':X_{k'}\rightarrow C'$
is the unique $\mathbb{A}^{1}$-fibration on $X_{k'}$. So $\rho'$
descend to an $\mathbb{A}^{1}$-fibration $\rho:X\rightarrow C$ over
a smooth irrational curve $C$ defined over $k$. 
\end{proof}

\section{Families of $\mathbb{A}^{1}$-ruled affine surfaces }

\subsection{Existence of \'etale $\mathbb{A}^{1}$-cylinders }

This subsection is devoted to the proof of the following:
\begin{thm}
\label{thm:MainTh-Family-Etale-cylinder} Let $X$ and $S$ be normal
algebraic varieties defined over a field $k$ of infinite transcendence
degree over $\mathbb{Q}$, and let $f:X\rightarrow S$ be a dominant
affine morphism with the property that for a general closed point
$s\in S$, the fiber $X_{s}$ is a smooth geometrically connected
affine surface with negative Kodaira dimension. Then there exists
an open subset $S_{*}\subset S$, a finite \'etale morphism $T\rightarrow S_{*}$
and a normal $T$-scheme $h:Y\rightarrow T$ such that $f_{T}=\mathrm{p}r_{T}:X_{T}=X\times_{S_{*}}T\rightarrow T$
factors as 
\[
f_{T}=h\circ\rho:X_{T}\stackrel{\rho}{\longrightarrow}Y\stackrel{h}{\longrightarrow}T
\]
where $\rho:X_{T}\rightarrow Y$ is an $\mathbb{A}^{1}$-fibration. \end{thm}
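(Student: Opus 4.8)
The plan is to reduce the family statement to the single-surface statement of Theorem~\ref{prop:Surface-etale-cylinder} by passing to the generic fiber, and then to spread the resulting structure out over a dense open subset of the base. First I would consider the generic fiber $X_{\eta}$ of $f$, a scheme over the function field $K=k(S)$. The hypothesis that a general closed fiber $X_{s}$ is a smooth geometrically connected affine surface with negative Kodaira dimension should transfer to $X_{\eta}$: by a Lefschetz-principle argument, smoothness, geometric connectedness, affineness and the dimension of the generic fiber are constructible properties that hold on a dense open locus, so they hold at the generic point. The negativity of the Kodaira dimension likewise passes from general closed fibers to the generic fiber, since $\kappa$ is insensitive to base-field extension (as recalled in \ref{par:Kod-dim}) and the relevant sheaf cohomology is semicontinuous in families; this is the point where the hypothesis that $k$ has infinite transcendence degree over $\mathbb{Q}$ enters, guaranteeing enough room for the Lefschetz argument and for embedding residue fields of closed points.

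Once I know $X_{\eta}$ is a smooth geometrically connected affine surface over $K$ with $\kappa(X_{\eta})<0$, Theorem~\ref{prop:Surface-etale-cylinder} applies: there is a finite extension $K_{0}/K$ and an $\mathbb{A}^{1}$-fibration $\rho_{0}:X_{\eta}\times_{K}K_{0}\rightarrow C_{0}$ over a smooth curve $C_{0}$ defined over $K_{0}$. The next step is to realize the finite extension $K_{0}/K$ geometrically. Since $K=k(S)$ and $K_{0}$ is a finite separable extension (we are in characteristic zero), $K_{0}$ is the function field of a normal variety $T$ equipped with a dominant generically finite morphism $T\rightarrow S$; after shrinking $S$ to a dense open subset $S_{*}$ over which this morphism becomes finite and \'etale, we obtain the desired finite \'etale cover $T\rightarrow S_{*}$, and $X_{\eta}\times_{K}K_{0}$ is the generic fiber of $f_{T}:X_{T}=X\times_{S_{*}}T\rightarrow T$.

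It then remains to spread out the $\mathbb{A}^{1}$-fibration $\rho_{0}$ over $T$. The curve $C_{0}$ and the morphism $\rho_{0}$ are objects of finite type defined over $K_{0}=k(T)$, so they extend, after possibly shrinking $T$ further (equivalently shrinking $S_{*}$), to a normal $T$-scheme $h:Y\rightarrow T$ whose generic fiber is $C_{0}$ and a $T$-morphism $\rho:X_{T}\rightarrow Y$ restricting to $\rho_{0}$ on generic fibers. The final verification is that $\rho$ is genuinely an $\mathbb{A}^{1}$-fibration, i.e.\ that its generic fiber over $Y$ is an affine line: this again follows from Example~\ref{Exa:forms-affine-line} together with the fact that being an $\mathbb{A}^{1}$-fibration is an open condition, so after shrinking it holds on all fibers. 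The main obstacle I anticipate is the transfer of negativity of $\kappa$ from the closed fibers to the generic fiber in a way that is uniform over the base; controlling the logarithmic plurigenera $H^{0}(\overline{X}_{s},\omega(\log B)^{\otimes m})$ simultaneously for the generic and general closed fibers, which is exactly where one must invoke semicontinuity on a relative SNC-completion and use the large transcendence degree of $k$ to guarantee a sufficiently generic choice of fiber.
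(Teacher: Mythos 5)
Your skeleton is the same as the paper's: shrink $S$, prove that the generic fiber $X_{\eta}$ is a smooth geometrically connected affine surface over $K=k(S)$ with $\kappa(X_{\eta})<0$, apply Theorem \ref{prop:Surface-etale-cylinder} to get an $\mathbb{A}^{1}$-fibration after a finite extension $L/K$, realize $L$ via the normalization $T$ of a shrunken $S$ in $L$ (finite \'etale after further shrinking), and spread the fibration out; this last step is exactly the paper's Lemma \ref{lem:Generic-to-openset}, and the correct verification is the one you gesture at: the generic fiber of the spread-out $\rho$ equals the generic fiber of $\rho_{0}$ over the common function field of $Y$ and $C_{0}$ (no appeal to Example \ref{Exa:forms-affine-line} or to openness is needed).

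The divergence is at the one step carrying all the weight: why $\kappa(X_{\eta})<0$. The paper never uses semicontinuity. It descends $f$ to a model $f_{0}:X_{0}\rightarrow S_{0}$ over a subfield $k_{0}\subset k$ of finite transcendence degree over $\mathbb{Q}$, picks a $k_{0}$-embedding $\xi:K_{0}=\mathrm{Frac}(\Gamma(S_{0},\mathcal{O}_{S_{0}}))\hookrightarrow k$ --- the only place where the hypothesis $\mathrm{tr}.\deg_{\mathbb{Q}}(k)=\infty$ is used --- and notes that $\xi$ defines a closed point $s\in S$ with $X_{s}\simeq(X_{0})_{\eta_{0}}\times_{K_{0}}\mathrm{Spec}(k)$. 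Invariance of $\kappa$ and of geometric connectedness under arbitrary field extensions (see \ref{par:Kod-dim}) then transfers the hypothesis from the closed fiber $X_{s}$ to $(X_{0})_{\eta_{0}}$, hence to $X_{\eta}=(X_{0})_{\eta_{0}}\times_{K_{0}}\mathrm{Spec}(K)$: the generic fiber literally becomes a closed fiber after an extension of scalars, so nothing about variation in families ever has to be controlled.

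Your alternative mechanism --- upper semicontinuity of $h^{0}(\omega_{\overline{X}_{s}}(\log B_{s})^{\otimes m})$ on a relative SNC completion --- is genuinely different and, if executed, sound: semicontinuity goes in the favorable direction here, since the generic fiber attains the minimal value of $h^{0}$, so for each fixed $m$ vanishing at a single general closed point already forces vanishing at $\eta$; and building the required log-smooth projective family over a dense open subset of $S$ (Nagata, Hironaka, generic smoothness in characteristic zero) is routine rather than the ``main obstacle'' you defer it as. But two statements in your write-up show that the two arguments have not been disentangled. First, the claim that the infinite transcendence degree of $k$ enters at this point is wrong for your route: semicontinuity needs no such hypothesis, so your argument, carried out, would prove a statement stronger than the paper's theorem; that hypothesis is needed only for the paper's embedding trick. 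Second, ``embedding residue fields of closed points'' describes neither argument --- residue fields of closed points are finite extensions of $k$; what gets embedded into $k$ is the function field $K_{0}$ of the descended base. As it stands, your key step is a superposition of two half-arguments; either one, completed, closes the proof, but you must commit to one and execute it.
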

\begin{proof}
Shrinking $S$ if necessary, we may assume that $S$ is affine, that
$f:X\rightarrow S$ is smooth and that $\kappa(X_{s})<0$ for every
closed point $s\in S$. It is enough to show that the fiber $X_{\eta}$
of $f$ over the generic point $\eta$ of $S$ is geometrically connected,
with negative Kodaira dimension. Indeed, if so, then by Theorem \ref{prop:Surface-etale-cylinder}
above, there exists a finite extension $L$ of $K=\mathrm{Frac}(\Gamma(S,\mathcal{O}_{S}))$
and an $\mathbb{A}^{1}$-fibration $\rho:X_{\eta}\times_{\mathrm{Spec}(K)}\mathrm{Spec}(L)\rightarrow C$
onto a smooth curve $C$ defined over $L$. Letting $T$ be the normalization
of $S$ in $L$ and shrinking $T$ again if necessary, we obtain a
finite \'etale morphism $T\rightarrow S$ such that the generic fiber
of $\mathrm{pr}_{T}:X_{T}\rightarrow T$ is isomorphic to the $\mathbb{A}^{1}$-fibered
surface $\rho:X_{\eta}\times_{\mathrm{Spec}(K)}\mathrm{Spec}(L)\rightarrow C$
and then the assertion follows from Lemma \ref{lem:Generic-to-openset}
below. 

Since $X$ and $S$ are affine and of finite type over $k$, there
exists a subfield $k_{0}$ of $k$ of finite transcendence degree
over $\mathbb{Q}$, and a smooth morphism $f_{0}:X_{0}\rightarrow S_{0}$
of $k_{0}$-varieties such that $f:X\rightarrow S$ is obtained from
$f_{0}:X_{0}\rightarrow S_{0}$ by the base extension $\mathrm{Spec}\left(k\right)\rightarrow\mathrm{Spec}(k_{0})$.
The field $K_{0}=\mathrm{Frac}(\Gamma(S_{0},\mathcal{O}_{S_{0}}))$
has finite transcendence degree over $\mathbb{Q}$ and hence, it admits
a $k_{0}$-embedding $\xi:K_{0}\hookrightarrow k$. Letting $(X_{0})_{\eta_{0}}$
be the fiber of $f_{0}$ over the generic point $\eta_{0}:\mathrm{Spec}(K_{0})\rightarrow S_{0}$
of $S_{0}$, the composition $\Gamma(S_{0},\mathcal{O}_{S_{0}})\hookrightarrow K_{0}\hookrightarrow k$
induces a $k$-homomorphism $\Gamma(S_{0},\mathcal{O}_{S_{0}})\otimes_{k_{0}}k\rightarrow k$
defining a closed point $s:\mathrm{Spec}(k)\rightarrow\mathrm{Spec}(\Gamma(S_{0},\mathcal{O}_{S_{0}})\otimes_{k_{0}}k)=S$
of $S$ for which obtain the following commutative diagram 

\[\xymatrix@!=16pt{ & X_s \ar[dl] \ar@{->}'[d][dd] \ar[rr] & & X \ar[dl] \ar[dd]_(0.5){f} \\ (X_0)_{\eta_0} \ar[rr] \ar[dd] & & X_0 \ar[dd]_(0.4){f_0} \\ & \mathrm{Spec}(k) \ar[dl]_{\xi^*} \ar@{->}'[r][rr]^{s} & & S \ar[r] \ar[dl]  & \mathrm{Spec}(k) \ar[dl] \\ \mathrm{Spec}(K_0) \ar[rr]^{\eta_0} & & S_0 \ar[r]  & \mathrm{Spec}(k_0) &  }\]  The
bottom square of the cube being cartesian by construction, we deduce
that 
\[
(X_{0})_{\eta_{0}}\times_{\mathrm{Spec}(K_{0})}\mathrm{Spec}(k)\simeq X_{0}\times_{S_{0}}\mathrm{Spec}(k)\simeq X\times_{S}\mathrm{Spec}(k)=X_{s}.
\]
Since by assumption, $X_{s}$ is geometrically connected with $\kappa(X_{s})<0$,
we conclude that $(X_{0})_{\eta_{0}}$ is geometrically connected
and has negative Kodaira dimension. This implies in turn that $X_{\eta}$
is geometrically connected and that $\kappa(X_{\eta})<0$ as desired. 
\end{proof}
In the proof of the above theorem, we used the following lemma:
\begin{lem}
\label{lem:Generic-to-openset} Let $f:X\rightarrow S$ be a dominant
affine morphism between normal varieties defined over a field $k$
of characteristic zero. Then the following are equivalent:

a) The generic fiber $X_{\eta}$ of $f$ admits an $\mathbb{A}^{1}$-fibration
$q:X_{\eta}\rightarrow C$ over a smooth curve $C$ defined over the
fraction field $K$ of $S$.

b) There exists an open subset $S_{*}$ of $S$ and a normal $S_{*}$-scheme
$h:Y\rightarrow S_{*}$ of relative dimension $1$ such that the restriction
of $f$ to $V=f^{-1}(S_{*})$ factors as $f\mid_{V}=h\circ\rho:V\rightarrow Y\rightarrow S_{*}$
where $\rho:V\rightarrow Y$ is an $\mathbb{A}^{1}$-fibration. \end{lem}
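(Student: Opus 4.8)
The plan is to treat this as a \emph{spreading-out} statement: an $\mathbb{A}^{1}$-fibration living over the generic point $\eta$ of $S$ should extend to a relative $\mathbb{A}^{1}$-fibration over a dense open subset of $S$, and conversely. The main tool throughout will be Grothendieck's limit and constructibility techniques (EGA IV, \S 8--9), which allow one to propagate morphisms and geometric properties from the generic fiber to an open neighborhood of $\eta$ in $S$.

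The implication b) $\Rightarrow$ a) is essentially formal. Restricting the factorization $f\mid_{V}=h\circ\rho$ to the fiber over the generic point $\eta$ of $S$ (which lies in every dense open $S_{*}$), I obtain a morphism $\rho_{\eta}\colon X_{\eta}\to Y_{\eta}$ whose fiber over the generic point of the $K$-curve $Y_{\eta}$ is the affine line, since $\rho$ is a relative $\mathbb{A}^{1}$-fibration. The only point requiring care is the smoothness of the target: the curve $Y_{\eta}$ need not be smooth a priori, but $X_{\eta}$ is normal, being a localization of the normal variety $X$, so the dominant morphism $\rho_{\eta}$ factors through the normalization $C\to Y_{\eta}$ by the universal property of normalization. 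Since $K$ has characteristic zero, $C$ is a smooth curve over $K$, and $X_{\eta}\to C$ is the desired $\mathbb{A}^{1}$-fibration.

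For the substantial direction a) $\Rightarrow$ b), I would proceed by successive shrinking of $S$. First, the smooth $K$-curve $C$ is of finite type over $K=\mathrm{Frac}(A)$, where $A=\Gamma(S_{*},\mathcal{O}_{S_{*}})$ for $S_{*}$ a suitable affine open; by the standard limit argument it descends to a finite-type $A$-scheme $h\colon Y\to S_{*}$ with generic fiber $Y_{\eta}\simeq C$. Shrinking $S_{*}$, I may arrange that $h$ is flat of relative dimension $1$ (generic flatness) and that $Y$ is normal, replacing $Y$ by its normalization, which does not affect the generic fiber since $C$ is already smooth. Next, the morphism $q\colon X_{\eta}\to C$ spreads out to a morphism $\rho\colon V=f^{-1}(S_{*})\to Y$ of $S_{*}$-schemes, after a further shrinking of $S_{*}$; and the equality $f\mid_{V}=h\circ\rho$, which holds over $\eta$ by construction, then holds over all of $S_{*}$ after shrinking once more, again by the limit formalism.

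It remains to check that the spread-out morphism $\rho$ is genuinely an $\mathbb{A}^{1}$-fibration, and this is the crux of the matter. The key observation is that the generic point $\eta_{Y}$ of $Y$ is exactly the generic point of $C$, with residue field the function field $K(C)$; hence the fiber of $\rho$ over $\eta_{Y}$ coincides with the fiber of $q$ over the generic point of $C$, which is $\mathbb{A}^{1}_{K(C)}$ by hypothesis. Thus the generic fiber of $\rho$ is the affine line over $K(Y)$, which is precisely the defining property of an $\mathbb{A}^{1}$-fibration; surjectivity and flatness of $\rho$ are then secured by a final shrinking of $S_{*}$ together with $Y$ (generic flatness, and the fact that a dominant morphism has dense open image up to shrinking). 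The main obstacle is not any single deep input but the careful bookkeeping of these repeated shrinkings, ensuring that normality of $Y$, flatness and relative dimension $1$ of $h$, the factorization $f\mid_{V}=h\circ\rho$, and the affine-line shape of the generic fiber of $\rho$ can all be achieved simultaneously over one common dense open $S_{*}\subset S$.
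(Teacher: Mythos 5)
Your proof is correct and follows essentially the same route as the paper's: both directions rest on the standard limit/spreading-out formalism, with the decisive point in a) $\Rightarrow$ b) being exactly the one you identify, namely that the generic point of $Y$ is the generic point of the curve $C$, so the generic fiber of $\rho$ is the generic fiber of $q$, i.e.\ $\mathbb{A}^{1}$ over $K(Y)=K(C)$. The only cosmetic differences are that the paper spreads out a smooth projective model $\overline{C}$ of $C$ rather than $C$ itself, and that in b) $\Rightarrow$ a) it observes directly that $Y_{\eta}$, being a localization of the normal scheme $Y$, is automatically normal and hence smooth over a field of characteristic zero, which renders your passage to the normalization unnecessary (though harmless).
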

\begin{proof}
If b) holds then letting $L$ be the fraction field of $Y$, we have
a commutative diagram
\[
\begin{array}{ccccc}
V_{\xi}=X_{\xi} & \rightarrow & V_{\eta}=X_{\eta} & \rightarrow & V\\
\downarrow\rho_{\xi} &  & \downarrow\rho_{\eta} &  & \downarrow\rho\\
\mathrm{Spec}(L) & \stackrel{\xi}{\rightarrow} & C=Y_{\eta} & \rightarrow & Y\\
 &  & \downarrow h_{\eta} &  & \downarrow h\\
 &  & \mathrm{Spec}(K) & \stackrel{\eta}{\rightarrow} & S_{*}
\end{array}
\]
in which each square is cartesian. It follows that $h_{\eta}:C\rightarrow\mathrm{Spec}(K)$
is a normal whence smooth curve defined over $K$ and that $\rho_{\eta}:X_{\eta}\rightarrow C$
is an $\mathbb{A}^{1}$-fibration. Conversely, suppose that $X_{\eta}$
admits an $\mathbb{A}^{1}$-fibration $q:X_{\eta}\rightarrow C$ and
let $\overline{C}$ be a smooth projective model of $C$ over $K$.
Then there exists an open subset $S_{0}$ of $S$ and a projective
$S_{0}$-scheme $h:Y\rightarrow S_{0}$ whose generic fiber is isomorphic
to $\overline{C}$. After shrinking $S_{0}$ if necessary, the rational
map $\rho:V\dashrightarrow Y$ of $S_{0}$-schemes induced by $q$
becomes a morphism and we obtain a factorization $f\mid_{V}=h\circ\rho$.
By construction, the generic fiber $V_{\xi}$ of $\rho:V\rightarrow Y$
is isomorphic to $V\times_{Y}\mathrm{Spec}(L)\simeq(V\times_{Y}C)\times_{C}\mathrm{Spec}(L)\simeq X_{\eta}\times_{C}\mathrm{Spec}(L)\simeq\mathbb{A}_{L}^{1}$
since $V\times_{Y}C\simeq V_{\eta}\simeq X_{\eta}$ and $\rho:X_{\eta}\rightarrow C\hookrightarrow\overline{C}$
is an $\mathbb{A}^{1}$-fibration. So $\rho:V\rightarrow Y$ is an
$\mathbb{A}^{1}$-fibration. \end{proof}
\begin{example}
\label{ex:delPezzo} Let $R=\mathbb{C}[s^{\pm1},t^{\pm1}]$, $S=\mathrm{Spec}(R)$
and let $D$ be the relatively ample divisor in $\mathbb{P}_{S}^{2}=\mathrm{Proj}_{R}(R[x,y,z])$
defined by the equation $x^{2}+sy^{2}+tz^{2}=0$. The restriction
$h:X=\mathbb{P}_{S}^{2}\setminus D\rightarrow S$ of the structure
morphism defines a family of smooth affine surfaces with the property
that for every closed point $s\in S$, $X_{s}$ is isomorphic to the
complement in $\mathbb{P}_{\mathbb{C}}^{2}$ of the smooth conic $D_{s}$.
In particular $X_{s}$ admits a continuum of pairwise distinct $\mathbb{A}^{1}$-fibrations
$X_{s}\rightarrow\mathbb{A}_{\mathbb{C}}^{1}$, induced by the restrictions
to $X_{s}$ of the rational pencils on $\mathbb{P}_{\mathbb{C}}^{2}$
generated by $D_{s}$ and twice its tangent line at an arbitrary closed
point $p_{s}\in D_{s}$. On the other hand the fiber of $D$ over
the generic point $\eta$ of $S$ is a conic without $\mathbb{C}(s,t)$-rational
point in $\mathbb{P}_{\mathbb{C}(s,t)}^{2}$ and hence, we conclude
by a similar argument as in Example \ref{Ex:A1-fib-extension} that
$X_{\eta}$ does not admit any $\mathbb{A}^{1}$-fibration defined
over $\mathbb{C}(s,t)$. Therefore there is no open subset $S_{*}$
of $S$ over which $h$ can be factored through an $\mathbb{A}^{1}$-fibration. 
\end{example}

\subsection{Deformations of irrational $\mathbb{A}^{1}$-ruled affine surfaces }

\indent\newline\indent In this subsection, we consider the particular
situation of a flat family $f:X\rightarrow S$ over a normal variety
$S$ whose general fibers are irrational $\mathbb{A}^{1}$-ruled affine
surfaces. A combination of Corollary \ref{cor:Irrational-fibration}
and Theorem \ref{thm:MainTh-Family-Etale-cylinder} above implies
that if $f:X\rightarrow S$ is smooth and defined over a field of
infinite transcendence degree over $\mathbb{Q}$, then the generic
fiber $X_{\eta}$ of $f$ is $\mathbb{A}^{1}$-ruled. Equivalently,
there exists an open subset $S_{*}\subset S$ and a normal $S_{*}$-scheme
$h:Y\rightarrow S_{*}$ such that the restriction of $f$ to $X_{*}=X\times_{S}S_{*}$
factors through an $\mathbb{A}^{1}$-fibration $\rho:X_{*}\rightarrow Y$
(see Lemma \ref{lem:Generic-to-openset}). The restriction of $\rho$
to the fiber of $f$ over a general closed point $s\in S_{0}$ is
an $\mathbb{A}^{1}$-fibration $\rho_{s}:X_{s}\rightarrow Y_{s}$
over the normal, whence smooth, curve $Y_{s}$. Since $X_{s}$ is
irrational, $Y_{s}$ is irrational, and so $\rho_{s}:X_{s}\rightarrow Y_{s}$
is the unique $\mathbb{A}^{1}$-fibration on $X_{s}$ up to composition
by automorphisms of $Y_{s}$. So in this case, we can identify $\rho_{s}:X_{s}\rightarrow Y_{s}$
with the Maximally Rationally Connected fibration (MRC-fibration)
$\varphi:\overline{X}_{s}\dashrightarrow Y_{s}$ of a smooth projective
model $\overline{X}_{s}$ of $X_{s}$ in the sense of \cite[IV.5]{Ko96}:
recall that $\varphi$ is unique, characterized by the property that
its general fibers are rationally connected and that for a very general
point $y\in Y_{s}$ any rational curve in $\overline{X}_{s}$ which
meets $\overline{X}_{y}$ is actually contained in $\overline{X}_{y}$.
The $\mathbb{A}^{1}$-fibration $\rho:X_{*}\rightarrow Y$ can therefore
be re-interpreted as being the MRC-fibration of a relative smooth
projective model $\overline{X}$ of $X$ over $S$. 

Reversing the argument, general existence and uniqueness results for
MRC-fibrations allow actually to get rid of the smoothness hypothesis
of a general fiber of $f:X\rightarrow S$ and to extend the conclusion
of Theorem \ref{thm:MainTh-Family-Etale-cylinder} to arbitrary base
fields of characteristic zero. Namely, we obtain the following characterization: 
\begin{thm}
\label{thm:Mrc-fibration}Let $X$ and $S$ be normal varieties defined
over a field $k$ of characteristic zero and let $f:X\rightarrow S$
be a dominant affine morphism with the property that for a general
closed point $s\in S$, the fiber $X_{s}$ is irrational and $\mathbb{A}^{1}$-ruled.
Then there exists an open subset $S_{*}$ and a normal $S_{*}$-scheme
$h:Y\rightarrow S_{*}$ such that the restriction of $f$ to $X_{*}=X\times_{S}S_{*}$
factors as 
\[
f\mid_{X_{*}}=h\circ\rho:X_{*}\stackrel{\rho}{\longrightarrow}Y\stackrel{h}{\longrightarrow}S_{*}
\]
where $\rho:X_{*}\rightarrow Y$ is an $\mathbb{A}^{1}$-fibration. \end{thm}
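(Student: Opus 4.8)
The plan is to replace the finite \'etale base change of Theorem \ref{thm:MainTh-Family-Etale-cylinder} by the uniqueness of the Maximally Rationally Connected fibration, which descends the construction to the non-closed field $K=\mathrm{Frac}(\Gamma(S,\mathcal{O}_{S}))$ without any extension and, operating on a projective model, dispenses with the smoothness of the fibers. By Lemma \ref{lem:Generic-to-openset} it is enough to produce an $\mathbb{A}^{1}$-fibration $q:X_{\eta}\rightarrow C$ over a smooth curve $C$ defined over $K$, where $X_{\eta}$ is the generic fiber of $f$. First I would choose, after shrinking $S$, a relative smooth projective model $\overline{f}:\overline{X}\rightarrow S$, namely a smooth projective $S$-scheme containing $X$ as a fiberwise dense open subset with $\overline{f}\mid_{X}=f$; such a model exists by Nagata compactification and Hironaka resolution applied to the generic fiber, followed by spreading out over an open subset of $S$.

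The central step is to form the relative MRC-fibration of $\overline{f}$ in the sense of \cite[IV.5]{Ko96}. Over the generic point this is the MRC-fibration $\psi_{\eta}:\overline{X}_{\eta}\dashrightarrow Y_{\eta}$ of the surface $\overline{X}_{\eta}$ defined over $K$; its existence over an algebraic closure $\overline{K}$ is guaranteed by loc. cit., and since the MRC-fibration is unique it is invariant under $\mathrm{Gal}(\overline{K}/K)$ and hence descends to a dominant rational $K$-map onto a normal $K$-variety $Y_{\eta}$. This uniqueness-driven descent is exactly what takes over the role of the finite \'etale morphism in Theorem \ref{thm:MainTh-Family-Etale-cylinder}. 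Spreading $\psi_{\eta}$ out yields, over a dense open subset of $S$, a relative MRC-fibration $\psi:\overline{X}\dashrightarrow Y$ to a normal base scheme $h:Y\rightarrow S$ that restricts to the MRC-fibration of $\overline{X}_{s}$ on a general closed fiber.

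It then remains to identify the base and to recognize an $\mathbb{A}^{1}$-fibration. For a general closed point $s$, the fiber $X_{s}$ is irrational and $\mathbb{A}^{1}$-ruled, so $\overline{X}_{s}$ is uniruled but not rationally connected---otherwise it would be a rational surface, contradicting the irrationality of $X_{s}$---whence its MRC quotient $Y_{s}$ is an irrational curve, in agreement with the identification of $\rho_{s}$ with the MRC-fibration recalled before the statement. Consequently $h$ has one-dimensional irrational fibers, the generic fiber $Y_{\eta}$ is an irrational smooth curve over $K$, and the general fibers of $\psi_{\eta}$ are rational curves. Since $X_{\eta}$ is normal and $Y_{\eta}$ has a smooth projective model $\overline{C}$ over $K$, the induced rational map $X_{\eta}\dashrightarrow\overline{C}$ is a genuine morphism $q:X_{\eta}\rightarrow\overline{C}$ whose general fiber is a smooth affine rational curve of negative Kodaira dimension, hence a form of the affine line, necessarily $\mathbb{A}^{1}$ over the function field of $\overline{C}$ by Example \ref{Exa:forms-affine-line}. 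Thus $q$ is an $\mathbb{A}^{1}$-fibration defined over $K$, and Lemma \ref{lem:Generic-to-openset} then provides the desired open subset $S_{*}$ together with the factorization $f\mid_{X_{*}}=h\circ\rho$ in which $\rho:X_{*}\rightarrow Y$ is an $\mathbb{A}^{1}$-fibration; the uniqueness part of Corollary \ref{cor:Irrational-fibration} further shows that $Y$ is intrinsic, being the relative MRC quotient.

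The hard part will be making rigorous the relative MRC-fibration over the non-closed field $K$: that the MRC-fibration of the generic fiber descends to $K$ by uniqueness and specializes to those of the general closed fibers, so that its base is genuinely an irrational curve. Checking that the general fiber of $q$ inherits negative Kodaira dimension from the $\mathbb{A}^{1}$-ruledness of the $X_{s}$---so that Example \ref{Exa:forms-affine-line} forces it to be $\mathbb{A}^{1}$ rather than a rational curve with a larger boundary---is the one remaining specialization point; the rest reduces to standard spreading-out together with the results of the first section.
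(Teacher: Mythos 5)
Your overall skeleton matches the paper's: reduce via Lemma \ref{lem:Generic-to-openset} to producing an $\mathbb{A}^{1}$-fibration on the generic fiber, and obtain it from the relative MRC-fibration of a smooth projective model, using the irrationality of the closed fibers to identify the MRC quotient with the base of the unique $\mathbb{A}^{1}$-fibration. However, the two points you defer at the end are not routine details---they are precisely where the paper's proof does its work---and your proposed shortcut for the crucial one is circular. Concretely: a general fiber of $q:X_{\eta}\rightarrow\overline{C}$ is the intersection of a proper rational curve (a fiber of $\psi_{\eta}$) with the affine open subset $X_{\eta}\subset\overline{X}_{\eta}$, and nothing in your argument bounds the number of points in which that curve meets the boundary $\overline{X}_{\eta}\setminus X_{\eta}$. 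If it met the boundary in two or more points, the fiber would be $\mathbb{G}_{m}$ or a further-punctured rational curve, of Kodaira dimension $\geq0$, and Example \ref{Exa:forms-affine-line} would say nothing. Asserting that this fiber ``has negative Kodaira dimension'' is equivalent to asserting the one-boundary-point property you are trying to establish, so it cannot serve as an input to the argument.

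The paper closes this gap with two ingredients absent from your write-up. First, it invokes Koll\'ar's theorem in its relative form (\cite[Theorem 5.9]{Ko96}): for the smooth model $W\rightarrow S$ there exist an open subset $W'\subset W$ and a proper morphism $\overline{q}:W'\rightarrow Z$ of $S$-schemes which is fiberwise the MRC-fibration for every $s\in S$; this also renders the Galois-descent step you flag as the other ``hard part'' unnecessary. Second, since on an irrational closed fiber the $\mathbb{A}^{1}$-fibration $\pi_{s}:X_{s}\rightarrow C_{s}$ is unique (Corollary \ref{cor:Irrational-fibration}) and its extension to $W_{s}$ is a fibration by rational curves over an irrational base, that extension \emph{is} the MRC-fibration of $W_{s}$; hence a general fiber of $\overline{q}$ meets $j(X)$ exactly along a fiber of $\pi_{s}$, i.e.\ along an affine line over the corresponding residue field. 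This yields a morphism $U\rightarrow Z$ on an affine open subset $U\subset X$ whose general \emph{closed} fibers are affine lines, and it is the theorem of Kambayashi--Miyanishi \cite{KM78} that converts this closed-fiber information into the statement about the \emph{generic} fiber (that it is $\mathbb{A}^{1}$ over the function field of $Z$)---exactly the ``specialization point'' you leave open before Lemma \ref{lem:Generic-to-openset} can be applied. Without the fiber-tracking through $j$ and without \cite{KM78} (or an equivalent flatness argument showing the boundary divisor of the fibration has relative degree one), the proof does not go through.
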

\begin{proof}
Shrinking $S$ if necessary, we may assume that for every closed point
$s\in S$, $X_{s}$ is irrational and $\mathbb{A}^{1}$-ruled, hence
carrying a unique $\mathbb{A}^{1}$-fibration $\pi_{s}:X_{s}\rightarrow C_{s}$
over an irrational normal curve $C_{s}$. Since $f:X\rightarrow S$
is affine, there exists a normal projective $S$-scheme $\overline{X}\rightarrow S$
and an open embedding $X\hookrightarrow\overline{X}$ of schemes over
$S$. Letting $W\rightarrow\overline{X}$ be a resolution of the singularities
of $\overline{X}$, we may assume up to shrinking $S$ again if necessary
that $W\rightarrow S$ is a smooth morphism. We let $j:X\dashrightarrow W$
be the birational map of $S$-schemes induced by the embedding $X\hookrightarrow\overline{X}$.
By virtue of \cite[Theorem 5.9]{Ko96}, there exists an open subset
$W'$ of $W$, an $S$-scheme $h:Z\rightarrow S$ and a proper morphism
$\overline{q}:W'\rightarrow Z$ such that for every $s\in S$, the
induced rational map $\overline{q}_{s}:W_{s}\dashrightarrow Z_{s}$
is the MRC-fibration for $W_{s}$. On the other hand, since $W_{s}$
is a smooth projective model of $X_{s}$, the induced rational map
$\pi_{s}:\overline{X}_{s}\dashrightarrow C_{s}$ is the MRC-fibration
for $W_{s}$. Consequently, for a general closed point $z\in Z$ with
$h(z)=s$, the fiber $W_{z}$ of $\overline{q}_{s}$, which is an
irreducible proper rational curve contained in $W_{s}$, must coincide
with the closure of the image by $j$ of a general closed fiber of
$\pi_{s}$. The latter being isomorphic to the affine line $\mathbb{A}_{\kappa}^{1}$
over the residue field $\kappa$ of the corresponding point of $C_{s}$,
we conclude that there exists an affine open subset $U$ of $X$ on
which the composition $\overline{q}\circ j:U\rightarrow Z$ is a well
defined morphism with general closed fibers isomorphic to affine lines
over the corresponding residue fields. So $\overline{q}\circ j:U\rightarrow Z$
is an $\mathbb{A}^{1}$-fibration bu virtue of \cite{KM78}. The generic
fiber of $f:X\rightarrow S$ is thus $\mathbb{A}^{1}$-ruled and the
assertion follows from Lemma \ref{lem:Generic-to-openset} above. \end{proof}
\begin{example}
\label{ex:fibration-from-moduli} Let $h:Y\rightarrow S$ be smooth
family of complex projective curves of genus $g\geq2$ over a normal
affine base $S$ et let $\mathcal{T}_{Y/S}$ be the relative tangent
sheaf of $h$. Since by Riemman-Roch $H^{0}(Y_{s},\mathcal{T}_{Y/S,s})=0$
and $\dim H^{1}(Y_{s},\mathcal{T}_{Y/S,s})=g-1$ for every point $s\in S$,
$h_{*}\mathcal{T}_{Y/S,s}=0$, $R^{1}gh_{*}\mathcal{T}_{Y/S}$ is
locally free of rank $g-1$ \cite[Corollary III.12.9]{Ha77} and so,
$H^{1}(Y,\mathcal{T}_{Y/S})\simeq H^{0}(S,R^{1}h_{*}\mathcal{T}_{Y/S})$
by the Leray spectral sequence. Replacing $S$ by an open subset,
we may assume that $R^{1}h_{*}\mathcal{T}_{Y/S}$ admits a nowhere
vanishing global section $\sigma$. Via the isomorphism $H^{1}(Y,\mathcal{T}_{Y/S})\simeq\mathrm{Ext}_{Y}^{1}(\mathcal{O}_{Y},\mathcal{T}_{Y/S})$,
we may interpret this section as the class of a non trivial extension
$0\rightarrow\mathcal{T}_{Y/S}\rightarrow\mathcal{E}\rightarrow\mathcal{O}_{Y}\rightarrow0$
of locally free sheaves over $Y$. The inclusion $\mathcal{T}_{Y/S}\rightarrow\mathcal{E}$
defines a section $D$ of the locally trivial $\mathbb{P}^{1}$-bundle
$\overline{\rho}:\overline{X}=\mathrm{Proj}(\mathrm{Sym}_{\mathcal{O}_{Y}}\mathcal{E}^{\vee})\rightarrow Y$
and the non vanishing of $\sigma$ guarantees that $D$ is the support
of an $S$-ample divisor. Indeed the $S$-ampleness of $D$ is equivalent
to the property that for every $s\in S$ the induced section $D_{s}$
of the $\mathbb{P}^{1}$ -bundle $\overline{\rho}_{s}:\overline{X}_{s}\rightarrow Y_{s}$
over the smooth projective curve $Y_{s}$ is ample. Since by construction,
$\overline{\rho}_{s}\mid_{\overline{X}_{s}\setminus D_{s}}:\overline{X}_{s}\setminus D_{s}\rightarrow Y_{s}$
is a nontrivial torsor under the line bundle $\mathrm{Spec}(\mathrm{Sym}\mathcal{T}_{Y_{s}}^{\vee})\rightarrow Y_{s}$,
it follows that $D_{s}$ intersects positively every section $D$
of $\overline{\rho}_{s}$ except maybe $D_{s}$ itself. On the other
hand, we have $(D_{s}^{2})=-\deg\mathcal{T}_{Y_{s}}=2g(Y_{s})-2>0$,
and so the ampleness of $D_{s}$ follows from the Nakai-Moishezon
criterion and the description of the cone effective cycles on an irrational
projective ruled surface given in \cite[Proposition 2.20-2.21]{Ha77}. 

Letting $X=\overline{X}\setminus D$, we obtain a smooth family 
\[
f=g\circ\overline{\rho}\mid_{X}:X\stackrel{\overline{\rho}\mid_{X}}{\longrightarrow}Y\stackrel{h}{\rightarrow}S
\]
where $\overline{\rho}\mid_{X}:X\rightarrow Y$ is nontrivial, locally
trivial, $\mathbb{A}^{1}$-bundle such that for every $s\in S$, $X_{s}$
is an affine surface with an $\mathbb{A}^{1}$-fibration $\rho_{s}:X_{s}\rightarrow Y_{s}$
of complete type. 
\end{example}
In contrast with the previous example, the following proposition shows
in particular that if the total space of a family of irrational $\mathbb{A}^{1}$-ruled
affine surfaces $f:X\rightarrow S$ has finite divisor class group,
then the induced $\mathbb{A}^{1}$-fibration on a general fiber of
$f:X\rightarrow S$ is of affine type. 
\begin{prop}
\label{prop:Ga-quotient-1} Let $X$ be a geometrically integral normal
variety with finite divisor class group $\mathrm{Cl}(X)$ and let
$f:X\rightarrow S$ be a dominant affine morphism to a normal variety
$S$ with the property that for a general closed point $s\in S$,
the fiber $X_{s}$ is irrational and $\mathbb{A}^{1}$-ruled, say
with unique $\mathbb{A}^{1}$-fibration $\pi_{s}:X_{s}\rightarrow C_{s}$.
Then there exists an effective $\mathbb{G}_{a,S}$-action on $X$
such that for a general closed point $s\in S$, the $\mathbb{A}^{1}$-fibration
$\pi_{s}:X_{s}\rightarrow C_{s}$ factors through the algebraic quotient
$\rho_{s}:X_{s}\rightarrow X_{s}/\!/\mathbb{G}_{a,s}=\mathrm{Spec}(\Gamma(X_{s},\mathcal{O}_{X_{s}})^{\mathbb{G}_{a,s}})$. \end{prop}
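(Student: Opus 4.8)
The plan is to move the whole problem to the generic fibre $X_{\eta}$ of $f$ over the generic point $\eta$ of $S$, to use the finiteness of $\mathrm{Cl}(X)$ to force the unique $\mathbb{A}^{1}$-fibration on $X_{\eta}$ to be of affine type, and then to manufacture the $\mathbb{G}_{a,S}$-action by clearing denominators in the vertical derivation $\partial/\partial t$ attached to that fibration. First I would shrink $S$ to a dense affine open subset; since $f$ is affine this makes $X=\mathrm{Spec}(A)$ and $S=\mathrm{Spec}(R)$ affine, keeps every closed fibre irrational and $\mathbb{A}^{1}$-ruled, and replaces $\mathrm{Cl}(X)$ by a quotient, so it stays finite. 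By Theorem~\ref{thm:Mrc-fibration} I may also assume that $f$ factors through a relative $\mathbb{A}^{1}$-fibration $\rho:X\to Y$; its fibre over $\eta$ is an $\mathbb{A}^{1}$-fibration $\pi_{\eta}:X_{\eta}\to C_{\eta}:=Y_{\eta}$ of the irrational affine surface $X_{\eta}$ over $K=\mathrm{Frac}(R)$, with $C_{\eta}$ irrational, and by Corollary~\ref{cor:Irrational-fibration} it is the unique $\mathbb{A}^{1}$-fibration on $X_{\eta}$ and restricts to $\pi_{s}$ for general $s$. It then remains to prove two things: (i) that $C_{\eta}$ is affine, and (ii) that $\pi_{\eta}$ gives rise to an $R$-linear locally nilpotent derivation of $A$.

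The essential point, and the only place where $\mathrm{Cl}(X)$ is used, is (i). Restricting divisors to the generic fibre yields a surjection $\mathrm{Cl}(X)\twoheadrightarrow\mathrm{Cl}(X_{\eta})$ whose kernel is generated by the prime divisors of $X$ not dominating $S$; hence $\mathrm{Cl}(X_{\eta})$ is finite. I would then rule out the complete type as follows. Identifying $K(X_{\eta})$ with $K(C_{\eta})(t)$ through the generic fibre $\mathbb{A}^{1}_{K(C_{\eta})}$ of $\pi_{\eta}$, every $\phi\in K(X_{\eta})^{*}$ whose divisor is vertical is a unit on $\mathrm{Spec}\,K(C_{\eta})[t]$ and therefore lies in $K(C_{\eta})^{*}$; this shows that the pullback $\pi_{\eta}^{*}:\mathrm{Pic}(C_{\eta})\to\mathrm{Cl}(X_{\eta})$ is injective. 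Were $C_{\eta}$ complete, the degree map would make $\mathrm{Pic}(C_{\eta})$ infinite and hence $\mathrm{Cl}(X_{\eta})$ infinite, a contradiction. So $C_{\eta}$ is affine, i.e.\ $\pi_{\eta}$, and with it $\pi_{s}$ for general $s$, is of affine type. The hard part will be exactly this dichotomy: checking the injectivity of $\pi_{\eta}^{*}$ and the surjectivity onto $\mathrm{Cl}(X_{\eta})$ over the non-closed field $K$.

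For (ii), with $C_{\eta}$ now affine, I set $B_{\eta}=\pi_{\eta}^{*}\Gamma(C_{\eta},\mathcal{O})\subset A_{\eta}=\Gamma(X_{\eta},\mathcal{O})$ and $L=\mathrm{Frac}(B_{\eta})=K(C_{\eta})$, so that $\mathrm{Frac}(A_{\eta})=L(t)$. Transporting $\partial/\partial t$ and multiplying by some $b\in B_{\eta}\setminus\{0\}$ that clears the denominators of $\partial/\partial t$ on a finite set of generators, I obtain a nonzero $K$-linear locally nilpotent derivation $D=b\,\partial/\partial t$ of $A_{\eta}$ with $B_{\eta}\subset\ker D$. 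Clearing denominators once more by an element of $R$ produces an $R$-linear locally nilpotent derivation $\partial$ of $A$, that is, an effective $\mathbb{G}_{a,S}$-action on $X$; over a non-affine base this is recorded intrinsically as an $\mathcal{O}_{S}$-linear locally nilpotent derivation of $f_{*}\mathcal{O}_{X}$, which makes sense because $f$ is affine.

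Finally I would check the statement about general fibres. For general $s$ the reduction $\partial_{s}$ of $\partial$ is again a nonzero locally nilpotent derivation of $\Gamma(X_{s},\mathcal{O})$, with algebraic quotient $\rho_{s}:X_{s}\to X_{s}/\!/\mathbb{G}_{a,s}=\mathrm{Spec}(\ker\partial_{s})$. Since $B_{\eta}\subset\ker D$ specializes to $\pi_{s}^{*}\Gamma(C_{s},\mathcal{O})\subset\ker\partial_{s}$ and $C_{s}$ is affine, this inclusion defines a morphism $X_{s}/\!/\mathbb{G}_{a,s}\to C_{s}$ through which $\pi_{s}$ factors, as asserted; the uniqueness of the $\mathbb{A}^{1}$-fibration on the irrational surface $X_{s}$ (Corollary~\ref{cor:Irrational-fibration}) ensures that the general orbits of $\mathbb{G}_{a,s}$ are precisely the general fibres of $\pi_{s}$, so the factorization is consistent.
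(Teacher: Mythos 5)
Your use of the finiteness of $\mathrm{Cl}(X)$ is correct but genuinely different from the paper's: you push it to the generic fibre via the surjection $\mathrm{Cl}(X)\twoheadrightarrow\mathrm{Cl}(X_{\eta})$, then rule out the complete type through the injectivity of $\pi_{\eta}^{*}:\mathrm{Pic}(C_{\eta})\rightarrow\mathrm{Cl}(X_{\eta})$ and the infinitude of the Picard group of a complete curve. That step is sound (granted the paper's convention that an $\mathbb{A}^{1}$-fibration is surjective and flat), and it is information the paper never establishes directly; the paper only needs a dense affine cylinder $\rho^{-1}(U)\simeq U\times\mathbb{A}^{1}$ inside $X$ and obtains the affine-type statement afterwards, as a corollary of the constructed action.

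The genuine gap is your opening reduction. The proposition asserts an effective $\mathbb{G}_{a,S}$-action on $X$ itself, and this does not transfer back from the shrunk family: once you replace $S$ by a dense affine open $S_{0}$ and $X$ by $f^{-1}(S_{0})$ (and shrink again to invoke Theorem \ref{thm:Mrc-fibration}), everything you construct --- the derivation $D=b\,\partial/\partial t$ of $A_{\eta}$ and then $\partial=rD$ with $r\in R=\Gamma(S_{0},\mathcal{O}_{S_{0}})$ --- lives only over $S_{0}$. An element of $R$ is a function on $S_{0}$ only; it cannot control poles along the codimension-one locus $f^{-1}(S\setminus S_{0})$, which your reduction has discarded, and a $\mathbb{G}_{a}$-action on $f^{-1}(S_{0})$ does not in general extend across that divisor --- this extension is precisely the crux of the statement. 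Your closing remark that the derivation is ``recorded intrinsically as an $\mathcal{O}_{S}$-linear locally nilpotent derivation of $f_{*}\mathcal{O}_{X}$'' presupposes exactly what is missing, namely a derivation defined over all of $S$. The weakening is not cosmetic: Proposition \ref{prop:Ga-quotient-2} and Corollary \ref{cor:A3-irrational} (where $B$ may be $\mathbb{P}^{1}$, so the base cannot be kept affine) apply the proposition to form $\mathrm{Spec}(\Gamma(X,\mathcal{O}_{X})^{\mathbb{G}_{a}})$ for the \emph{full} variety $X$ and to factor $f$ globally. The repair is the paper's global use of the hypothesis: the complement in the original $X$ of the affine cylinder $\rho^{-1}(U)$ has pure codimension one, so finiteness of $\mathrm{Cl}(X)$ provides a single $a\in\Gamma(X,\mathcal{O}_{X})$ whose divisor is supported exactly there; $a$ is a unit on the cylinder, hence automatically killed by the translation derivation $\partial_{0}$, and $a^{n}\partial_{0}$ for $n\gg0$ preserves $\Gamma(f^{-1}(V),\mathcal{O}_{X})$ for every affine $V\subset S$, clearing denominators in the fibre direction and in the base direction simultaneously --- whereas your two separate clearings, by $b\in B_{\eta}$ and $r\in R$, only prove the proposition with $(X,S)$ replaced by $(f^{-1}(S_{0}),S_{0})$.
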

\begin{proof}
Let $f\mid_{X_{*}}=h\circ\rho:X_{*}\stackrel{\rho}{\longrightarrow}Y\stackrel{h}{\longrightarrow}S_{*}$
be as in Theorem \ref{thm:Mrc-fibration}. Since $\rho$ is an $\mathbb{A}^{1}$-fibration,
there exists an affine open subset $U\subset Y$ such that $\rho^{-1}(U)\simeq U\times\mathbb{A}^{1}$
as schemes over $U$. Since $\rho^{-1}(U)$ is affine, its complement
in $X$ is of pure codimension $1$, and the finiteness of $\mathrm{Cl}(X)$
implies that it is actually the support of an effective principal
divisor $\mathrm{div}_{X}(a)$ for some $a\in\Gamma(X,\mathcal{O}_{X})$.
Letting $\partial_{0}$ be the locally nilpotent derivation of $\Gamma(\rho^{-1}(U),\mathcal{O}_{X})\simeq\Gamma(X,\mathcal{O}_{X})_{a}$
corresponding to the $\mathbb{G}_{a,U}$-action by translations on
the second factor, the finite generation of $\Gamma(X,\mathcal{O}_{X})$
guarantees that for a suitably chosen $n\geq0$, $a^{n}\partial_{0}$
lifts to a locally nilpotent derivation $\partial$ of $\Gamma(X,\mathcal{O}_{X})$.
By construction, the restriction of $f$ to the dense open subset
$\rho^{-1}(U)$ of $X$ is invariant under the corresponding $\mathbb{G}_{a}$-action,
and so $f:X\rightarrow S$ is $\mathbb{G}_{a}$-invariant. For a general
closed point $s\in S$, the induced $\mathbb{G}_{a}$-action on $X_{s}$
is nontrivial, and its algebraic quotient $\rho_{s}:X_{s}\rightarrow X_{s}/\!/\mathbb{G}_{a}=\mathrm{Spec}(\Gamma(X_{s},\mathcal{O}_{X_{s}})^{\mathbb{G}_{a}})$
is a surjective $\mathbb{A}^{1}$-fibration over a normal affine curve
$X_{s}/\!/\mathbb{G}_{a}$. Since $C_{s}$ is irrational, the general
fibers of $\rho_{s}$ and $\pi_{s}$ must coincide. It follows that
$\pi_{s}$ is $\mathbb{G}_{a}$-invariant, whence factors through
$\rho_{s}$. 
\end{proof}

\section{Affine threefolds fibered in irrational $\mathbb{A}^{1}$-ruled surfaces}

In this section we consider in more detail the case of normal complex
affine threefolds $X$ admitting a fibration $f:X\rightarrow B$ by
irrational $\mathbb{A}^{1}$-ruled surfaces, over a smooth curve $B$.
We explain how to derive the variety $h:Y\rightarrow B$ for which
$f$ factors through an $\mathbb{A}^{1}$-fibration $\rho:X\rightarrow Y$
from a relative minimal model program applied to a suitable projective
model of $X$ over $B$. In the case where the divisor class group
of $X$ is finite, we provide a complete classification of such fibrations
in terms of additive group actions on $X$.

\subsection{$\mathbb{A}^{1}$-cylinders via relative Minimal Model Program }

\indent\newline\indent Let $X$ be a normal complex affine threefold
and let $f:X\rightarrow B$ be a flat morphism onto a smooth curve
$B$ with the property that a general closed fiber $X_{b}$ of $f$
is an irreducible irrational $\mathbb{A}^{1}$-ruled surface. We let
$\overline{f}:W\rightarrow B$ be a smooth projective model of $X$
over $B$ obtained from an arbitrary normal relative projective completion
$X\hookrightarrow\overline{X}$ of $X$ over $B$ by resolving the
singularities. We let $j:X\dashrightarrow W$ be the birational map
induced by the open immersion $X\hookrightarrow\overline{X}$. 

By applying a minimal model program for $W$ over $B$, we obtain
a sequence of birational $B$-maps 
\[
W=W_{0}\stackrel{\varphi_{1}}{\dashrightarrow}W_{1}\stackrel{\varphi_{2}}{\dashrightarrow}W_{2}\dashrightarrow\cdots\dashrightarrow W_{\ell-1}\stackrel{\varphi_{\ell}}{\dashrightarrow}W_{\ell}=W',
\]
between $B$-schemes $\overline{f}_{i}:W_{i}\rightarrow B$, where
$\varphi_{i}:W_{i}\dashrightarrow W_{i+1}$ is either a divisorial
contraction or a flip, and the rightmost variety $W'$ is the output
of a minimal model program over $B$. The hypotheses imply that $W'$
has the structure of a Mori conic bundle $\overline{\rho}:W'\rightarrow Y$
over a $B$-scheme $h:Y\rightarrow B$ corresponding to the contraction
of an extremal ray of $\overline{\mathrm{NE}}(W'/B)$. Indeed, a general
fiber of $\overline{f}$ being a birationaly ruled projective surface,
the output $W'$ is not a minimal model of $W$ over $B$. So $W'$
is either a Moric conic bundle over a $B$-scheme $Y$ of dimension
$2$ or a del Pezzo fibration over $B$, the second case being excluded
by the fact that the general fibers of $\overline{f}$ are irrational. 
\begin{prop}
\label{prop:MMP-output}The induced map $\rho=\overline{\rho}\mid_{X}:X\dashrightarrow Y$
is a rational $\mathbb{A}^{1}$-fibration.\end{prop}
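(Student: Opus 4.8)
The plan is to mirror the proof of Theorem~\ref{thm:Mrc-fibration}, with the relative MRC-fibration there replaced by the Mori conic bundle $\overline{\rho}:W'\rightarrow Y$, and to reduce the whole question to the behaviour over a general closed point $b\in B$. So I would first fix a general $b\in B$ and restrict the sequence of birational $B$-maps $W=W_{0}\dashrightarrow\cdots\dashrightarrow W_{\ell}=W'$ to the fibres over $b$. Each step is a divisorial contraction or a flip of an extremal ray of $\overline{\mathrm{NE}}(W_{i}/B)$, so the contracted curves are vertical; for $b$ outside a finite subset of $B$ the induced map $W_{b}\dashrightarrow W_{b}'$ is therefore birational, $W_{b}'$ is again a projective model of the general fibre $X_{b}$, and the restriction $\overline{\rho}_{b}:W_{b}'\rightarrow Y_{b}$ of $\overline{\rho}$ over $b$ is a conic bundle onto the curve $Y_{b}=h^{-1}(b)$.

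The second step is to identify $\overline{\rho}_{b}$ with the ruling attached to the unique $\mathbb{A}^{1}$-fibration on $X_{b}$. Since $X_{b}$, hence $W_{b}'$, is irrational, the base $Y_{b}$ cannot be rational, for otherwise the conic bundle $\overline{\rho}_{b}$ would exhibit $W_{b}'$ as a rational surface. Thus $\overline{\rho}_{b}$ is a ruling of $W_{b}'$ over an irrational curve, that is, its MRC-fibration. By Corollary~\ref{cor:Irrational-fibration} the $\mathbb{A}^{1}$-fibration $\pi_{b}:X_{b}\rightarrow C_{b}$ is the unique one on $X_{b}$, and the associated ruling of any smooth projective model of $X_{b}$ over an irrational curve is unique up to birational equivalence of the base. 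Consequently $\overline{\rho}_{b}$ agrees, up to such an equivalence, with the extension of $\pi_{b}$; in particular $Y_{b}$ is birational to $C_{b}$ and a general fibre of $\overline{\rho}_{b}$ is the closure in $W_{b}'$ of a general fibre of $\pi_{b}$, whence a rational curve meeting $X_{b}$ along an affine line.

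Finally I would globalise exactly as in the closing lines of the proof of Theorem~\ref{thm:Mrc-fibration}. The previous step shows that the general closed fibres of $\rho=\overline{\rho}\mid_{X}:X\dashrightarrow Y$ are affine lines over the corresponding residue fields. Hence there is a dense affine open subset $U\subset X$ on which $\rho$ restricts to a well-defined morphism $U\rightarrow Y$ with general closed fibres isomorphic to affine lines, and $\rho\mid_{U}$ is an $\mathbb{A}^{1}$-fibration by \cite{KM78}. Therefore $\rho:X\dashrightarrow Y$ is a rational $\mathbb{A}^{1}$-fibration.

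I expect the main obstacle to lie in the first two steps: one must check that the relative MMP behaves well along a general fibre, so that $W_{b}'$ is a genuine projective model of $X_{b}$ and that the restricted Mori conic bundle $\overline{\rho}_{b}$ is precisely the MRC-fibration of $W_{b}'$. This is the analogue here of the fibrewise MRC statement used in Theorem~\ref{thm:Mrc-fibration}, but it now rests on understanding the restriction of divisorial contractions and flips to general fibres rather than on a direct appeal to \cite[Theorem 5.9]{Ko96}. Once $W_{b}'$ is known to carry a ruling over an irrational base, its coincidence with $\pi_{b}$ is forced by the uniqueness in Corollary~\ref{cor:Irrational-fibration}, and the passage from general fibres to a rational $\mathbb{A}^{1}$-fibration via \cite{KM78} is then routine.
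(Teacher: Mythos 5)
Your strategy (restrict the MMP to a general fibre, identify the restricted conic bundle with the unique ruling of the irrational surface, globalise via \cite{KM78}) is workable, but as written it has a gap at the decisive step, which is precisely the point where the paper's proof does all of its work. From the agreement of $\overline{\rho}_{b}$ with the extension of $\pi_{b}$ ``up to birational equivalence of the base'' you conclude that a general fibre of $\overline{\rho}_{b}$ is the closure in $W'_{b}$ of a general fibre of $\pi_{b}$, ``whence a rational curve meeting $X_{b}$ along an affine line.'' This conflates $X_{b}$ with its image under the composite birational map $j'_{b}:X_{b}\dashrightarrow W'_{b}$ obtained by restricting $\varphi_{\ell}\circ\cdots\circ\varphi_{1}\circ j$ over $b$. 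Uniqueness of the ruling only gives a commutative square of rational maps; it says nothing about where $j'_{b}$ is defined or which curves it contracts. Note that the divisorial contractions whose exceptional divisors dominate $B$ \emph{do} modify every general fibre $W_{i,b}$, so one must rule out that they contract a curve of the image of $X_{b}$ which is horizontal for $\pi_{b}$, or that such a curve lies in the indeterminacy locus of $j'_{b}$; in either scenario the general fibres of $\rho\mid_{X_{b}}$ would only be proper open subsets of $\mathbb{A}^{1}$ and \cite{KM78} could not be invoked. The paper's proof exists exactly to exclude this: it follows the cylinder $\pi_{b}^{-1}(U_{b})\simeq U_{b}\times\mathbb{A}^{1}$ through each step of the MMP, showing via irrationality of $U_{b}$ that every exceptional locus meets the cylinder only along fibres of $\pi_{b}$, so that after shrinking $U_{b}$ to $U_{b,0}$ the cylinder maps isomorphically onto its image in every $W_{i,b}$, and in particular in $W'_{b}$, where the fibres of $\overline{\rho}_{b}$ meeting it are then whole fibres of $\pi_{b}$.

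The gap is repairable within your framework, and doing so yields a genuinely different route from the paper's step-by-step tracking. Since $X_{b}$ is normal and $Y_{b}$ is proper, the composite $\rho_{b}=\overline{\rho}_{b}\circ j'_{b}:X_{b}\dashrightarrow Y_{b}$ is defined away from a finite set of points, by the valuative criterion applied at the codimension-one points of $X_{b}$. A general fibre of $\pi_{b}$ avoids this set and is a rational curve, so its image in the irrational proper curve $Y_{b}$ is a single point: hence $\rho_{b}=\psi\circ\pi_{b}$ as rational maps for some $\psi:C_{b}\dashrightarrow Y_{b}$, and $\psi$ is birational by your uniqueness statement. Moreover the finitely many fibre components of $\pi_{b}$ lying over $C_{b}\setminus\mathrm{dom}(\psi)$ or meeting the indeterminacy set are rational, hence each maps to a single point of $Y_{b}$. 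Therefore, for a general closed point $y\in Y_{b}$, one gets $\rho_{b}^{-1}(y)=\pi_{b}^{-1}(\psi^{-1}(y))\simeq\mathbb{A}^{1}$ exactly, and in particular no horizontal curve can have been contracted or left undefined. With this argument substituted for the unjustified sentence in your second step, your proof closes, the final globalisation via \cite{KM78} being the same as in the paper; what your route buys is that the fibrewise control comes from normality, properness and Lüroth rather than from an explicit analysis of each divisorial contraction and flip, at the price of invoking the uniqueness of rulings on projective models, a statement close to but not literally Corollary \ref{cor:Irrational-fibration}.
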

\begin{proof}
Since a general closed fiber $X_{b}$ is a normal affine surface with
an $\mathbb{A}^{1}$-fibration $\pi_{b}:X_{b}\rightarrow C_{b}$ over
a certain irrational smooth curve $C_{b}$, it follows that there
exists a unique maximal affine open subset $U_{b}$ of $C_{b}$ such
that $\pi_{b}^{-1}(U_{b})\simeq U_{b}\times\mathbb{A}^{1}$ and such
that the rational map $j_{b}:\pi_{b}^{-1}(U_{b})\dashrightarrow W_{b}$
induced by $j$ is regular, inducing an isomorphism between $\pi_{b}^{-1}(U_{b})$
and its image. Each step $\varphi_{i}:W_{i}\dashrightarrow W_{i+1}$
consists of either a flip whose flipping and flipped curves are contained
in fibers of $\overline{f}_{i}:W_{i}\rightarrow B$ and $\overline{f}_{i+1}:W_{i+1}\rightarrow B$
respectively, or a divisorial contraction whose exceptional divisor
is contained in a fiber of $\overline{f}_{i}:W_{i}\rightarrow B$,
or a divisorial contraction whose exceptional divisor intersects a
general fiber of $\overline{f}_{i}:W_{i}\rightarrow B$. Clearly,
a general closed fiber of $\overline{f}_{i}:W_{i}\rightarrow B$ is
not affected by the first two types of birational maps. On the other
hand, if $\varphi_{i}:W_{i}\rightarrow W_{i+1}$ is the contraction
of a divisor $E_{i}\subset W_{i}$ which dominates $B$, then a general
fiber of $\varphi_{i}\mid_{E_{i}}$ is a smooth proper rational curve.
The intersection of $E_{i}$ with a general closed fiber $W_{i,b}$
of $\overline{f}_{i}$ thus consists of proper rational curves, and
its intersection with the image of the maximal affine cylinder like
open subset $\pi_{b}^{-1}(U_{b})$ of $X_{b}$ is either empty or
composed of affine rational curves. Since $U_{b}$ is an irrational
curve, it follows that each irreducible component of $E_{i}\cap(\pi_{b}^{-1}(U_{b}))$
is contained in a fiber of $\pi_{b}$. This implies that there exists
an open subset $U_{b,0}$ of $U_{b}$ with the property that for every
$i=1,\ldots,\ell$, the restriction of $\varphi_{i}\circ\cdots\circ\varphi_{1}\circ j$
to $\pi_{b}^{-1}(U_{b,0})\subset X_{b}$ is an isomorphism onto its
image in $W_{i,b}$. A general fiber of $\overline{\rho}:W'\rightarrow Y$
over a closed point $y\in Y$ being a smooth proper rational curve,
its intersection with $\pi_{h(y)}^{-1}(U_{h(y),0})$ viewed as an
open subset of $W'_{h(y)}$, is thus either empty or equal to a fiber
of $\pi_{h(y)}$. So by virtue of \cite{KM78}, there exists an open
subset $V$ of $X$ on which $\overline{\rho}$ restricts to an $\mathbb{A}^{1}$-fibration
$\overline{\rho}\mid_{V}:V\rightarrow Y$. \end{proof}
\begin{cor}
\label{cor:Threefold-Bir-model}Let $X$ be a normal complex affine
threefold $X$ equipped with a morphism $f:X\rightarrow B$ onto a
smooth curve $B$ whose general closed fibers are irrational $\mathbb{A}^{1}$-ruled
surfaces. Then $X$ is birationaly equivalent to the product of $\mathbb{P}^{1}$
with a family $h_{0}:\mathcal{C}_{0}\rightarrow B_{0}$ of smooth
projective curves of genus $g\geq1$ over an open subset $B_{0}\subset B$. \end{cor}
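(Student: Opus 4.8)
The goal is to exhibit a birational equivalence between $X$ and a product $\mathbb{P}^1 \times \mathcal{C}_0$, where $\mathcal{C}_0 \to B_0$ is a family of smooth projective curves of genus $g\geq 1$ over an open subset of $B$. The strategy is to exploit the $\mathbb{A}^1$-fibration structure already produced in Proposition \ref{prop:MMP-output}. By that proposition, after passing to a suitable open subset $V\subset X$, the map $\rho=\overline{\rho}\mid_X:X\dashrightarrow Y$ is a rational $\mathbb{A}^1$-fibration over the $B$-scheme $h:Y\rightarrow B$. First I would observe that since $\rho$ is a rational $\mathbb{A}^1$-fibration, its generic fiber is isomorphic to the affine line over the function field of $Y$; in particular $X$ is birational over $Y$ to $Y\times\mathbb{A}^1$, and hence birational (over the base field) to $Y\times\mathbb{P}^1$ once one records that an $\mathbb{A}^1$-bundle is birationally trivial.

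The second step is to identify $Y$ birationally with a family of smooth projective curves of genus $g\geq 1$. Here the key input is the irrationality hypothesis: for a general closed point $b\in B$, the fiber $Y_b$ is the base of the unique $\mathbb{A}^1$-fibration $\pi_b:X_b\rightarrow C_b$, and $C_b$ is irrational by Corollary \ref{cor:Irrational-fibration}. Thus the general fiber $Y_b$ of $h:Y\rightarrow B$ is an irrational smooth curve, so its smooth projective model has genus $g(Y_b)\geq 1$. I would then take a smooth projective model $h_0:\mathcal{C}_0\rightarrow B_0$ of $Y\rightarrow B$ over a suitable open subset $B_0\subset B$: shrinking $B$ so that $h$ is fibered in smooth irrational curves, one replaces each fiber by its smooth projective completion, which is possible over an open subset by generic smoothness and the relative completion/resolution available over a curve. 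Since the genus is constant on a connected open subset, all fibers of $\mathcal{C}_0\to B_0$ have the same genus $g\geq 1$.

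Combining the two steps, $X$ is birational to $Y\times\mathbb{P}^1$, and $Y$ is birational over $B_0$ to $\mathcal{C}_0$, whence $X$ is birational to $\mathcal{C}_0\times\mathbb{P}^1$ as desired. The main obstacle I anticipate is bookkeeping the various shrinkings of the base $B$ and the transition from the rational $\mathbb{A}^1$-fibration on $X$ to an honest birational trivialization: one must check that restricting to the generic fiber over $B$ does not destroy the relative structure, and that the smooth projective model of the curve fibration $Y\rightarrow B$ genuinely exists as a $B_0$-scheme with the stated genus. The genus bound $g\geq 1$ is exactly the reflection of irrationality and is the substantive geometric content; everything else is a matter of assembling the birational picture over an appropriate open subset of the base curve.
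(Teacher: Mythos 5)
Your proposal is correct and follows essentially the same route as the paper's own proof: deduce from Proposition \ref{prop:MMP-output} that $X$ is birational to $Y\times\mathbb{P}^{1}$, identify the general fiber $Y_{b}$ birationally with the irrational base curve $C_{b}$ of the unique $\mathbb{A}^{1}$-fibration on $X_{b}$, and then desingularize $Y$ and shrink $B$ to obtain the smooth projective family $h_{0}:\mathcal{C}_{0}\rightarrow B_{0}$ of genus $g\geq1$ curves. The only cosmetic difference is that the paper passes explicitly through a desingularization $\sigma:\tilde{Y}\rightarrow Y$ and uses properness of $Y$ over $B$ where you speak of relative smooth projective models, but the content is identical.
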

\begin{proof}
By the previous Proposition, $X$ has the structure of a rational
$\mathbb{A}^{1}$-fibration $\rho:X\dashrightarrow Y$ over a $2$-dimensional
normal proper $B$-scheme $h:Y\rightarrow B$. In particular, $X$
is birational to $Y\times\mathbb{P}^{1}$. On the other hand, for
a general closed point $b\in B$, the curve $Y_{b}$ is birational
to the base $C_{b}$ of the unique $\mathbb{A}^{1}$-fibration $\pi_{b}:X_{b}\rightarrow C_{b}$
on the irrational affine surface $X_{b}$. Letting $\sigma:\tilde{Y}\rightarrow Y$
be a desingularization of $Y$, there exists an open subset $B_{0}$
of $B$ over which the composition $h\circ\sigma:\tilde{Y}\rightarrow Y$
restricts to a smooth family $h_{0}:\mathcal{C}_{0}\rightarrow B_{0}$
of projective curves of a certain genus $g\geq1$. By construction,
$X$ is birational to $\mathcal{C}_{0}\times\mathbb{P}^{1}$. \end{proof}
\begin{rem}
Example \ref{ex:fibration-from-moduli} above shows conversely that
for every smooth family $h:\mathcal{C}\rightarrow B$ of projective
curves of genus $g\geq2$, there exists a smooth $\mathbb{A}^{1}$-ruled
affine threefold $X$ birationaly equivalent to $\mathcal{C}\times\mathbb{P}^{1}$.
Actually, in the setting of the previous Corollary \ref{cor:Threefold-Bir-model},
if we assume further that a general fiber of $f:X\rightarrow B$ carries
an $\mathbb{A}^{1}$-fibration $\pi_{b}:X_{b}\rightarrow C_{b}$ over
a smooth curve $C_{b}$ whose smooth projective model has genus $g\geq2$,
then there exists a uniquely determined family $h:\mathcal{C}\rightarrow B$
of proper stable curves of genus $g$ such that $X$ is birationaly
equivalent to $\mathcal{C}\times\mathbb{P}^{1}$: indeed, the moduli
stack $\overline{\mathcal{M}}_{g}$ of stable curves of genus $g\geq2$
being proper and separated, the smooth family $h_{0}:\mathcal{C}_{0}\rightarrow B_{0}$
extends in a unique way to a family $h:\mathcal{C}\rightarrow B$
of stable curves of genus $g$.
\end{rem}

\subsection{Factorial threefolds }
\begin{prop}
\label{prop:Ga-quotient-2}Let $X$ be a normal affine threefold with
finite divisor class group $\mathrm{Cl}(X)$ and let $f:X\rightarrow B$
be a morphism onto a smooth curve $B$ whose general closed fibers
are irrational $\mathbb{A}^{1}$-ruled surfaces. Then there exists
a factorisation $f=h\circ\rho:X\rightarrow Y\rightarrow B$ where
$\rho:X\rightarrow Y$ is the algebraic quotient morphism of an effective
$\mathbb{G}_{a,B}$-action on $X$. In particular, a general fiber
of $f$ admits an $\mathbb{A}^{1}$-fibration of affine type. \end{prop}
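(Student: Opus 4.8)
The plan is to build on Proposition \ref{prop:Ga-quotient-1}, which already produces the group action, and to upgrade its conclusion from a statement about individual general fibers to a global factorization over $B$. First I would apply Proposition \ref{prop:Ga-quotient-1} with $S=B$: since $X$ is a normal affine threefold with finite divisor class group and the general fibers of $f$ are irrational and $\mathbb{A}^{1}$-ruled, this yields an effective $\mathbb{G}_{a,B}$-action on $X$, equivalently a nonzero locally nilpotent $\Gamma(B,\mathcal{O}_{B})$-derivation $\partial$ of $A=\Gamma(X,\mathcal{O}_{X})$, whose induced action on a general fiber $X_{b}$ is nontrivial and whose algebraic quotient realizes the unique $\mathbb{A}^{1}$-fibration $\pi_{b}:X_{b}\rightarrow C_{b}$.

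The crux is then to promote the quotients of the individual fibers to a single global algebraic quotient $\rho:X\rightarrow Y$. Here I would use the three-dimensionality of $X$ in an essential way: since $\partial\neq0$, the subfield $\mathrm{Frac}(A)^{\partial}$ of $\partial$-invariant rational functions has transcendence degree $2$ over $\mathbb{C}$, so that $\ker\partial=A\cap\mathrm{Frac}(A)^{\partial}$ is the intersection of the normal affine domain $A$ with a subfield of its fraction field of transcendence degree $\le 2$. By Zariski's finiteness theorem such an intersection is a finitely generated $\mathbb{C}$-algebra; consequently $A^{\mathbb{G}_{a}}=\ker\partial$ is finitely generated and normal, so that $Y=\mathrm{Spec}(A^{\mathbb{G}_{a}})$ is a normal affine surface and $\rho:X\rightarrow Y$ is the associated algebraic quotient morphism. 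Because $\partial$ is a $\Gamma(B,\mathcal{O}_{B})$-derivation, the pullback of $\Gamma(B,\mathcal{O}_{B})$ lies in $A^{\mathbb{G}_{a}}$, and the inclusion $\Gamma(B,\mathcal{O}_{B})\hookrightarrow A^{\mathbb{G}_{a}}$ induces a morphism $h:Y\rightarrow B$ together with the desired factorization $f=h\circ\rho$.

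It remains to identify the induced fibration on a general fiber and to verify that it is of affine type. For a general closed point $b\in B$ the fiber $Y_{b}$ is a closed curve inside the affine surface $Y$, hence affine; generic flatness and the compatibility of the formation of $A^{\mathbb{G}_{a}}$ with restriction to a general fiber yield an isomorphism $Y_{b}\simeq X_{b}/\!/\mathbb{G}_{a}$, under which $\rho_{b}:X_{b}\rightarrow Y_{b}$ becomes the algebraic quotient of the induced $\mathbb{G}_{a}$-action on $X_{b}$. By the conclusion of Proposition \ref{prop:Ga-quotient-1} this quotient shares its general fibers with the unique $\mathbb{A}^{1}$-fibration $\pi_{b}$, so $\pi_{b}$ factors through $\rho_{b}$ and both have the affine base curve $Y_{b}$. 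Thus $\pi_{b}$ is an $\mathbb{A}^{1}$-fibration of affine type, as claimed.

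I expect the main obstacle to be the finite generation of the invariant ring in the second step: this is exactly where the hypothesis $\dim X=3$ enters, forcing the invariant field to have transcendence degree $2$ so that Zariski's theorem applies, whereas in higher dimension finite generation may fail by the known counterexamples to Hilbert's fourteenth problem for $\mathbb{G}_{a}$-actions. A secondary technical point to treat with care is the base-change compatibility $Y_{b}\simeq X_{b}/\!/\mathbb{G}_{a}$ for general $b$, which I would argue generically rather than fiber by fiber, since formation of invariants need not commute with restriction over special points of $B$.
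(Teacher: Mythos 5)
Your overall strategy is the same as the paper's: apply Proposition \ref{prop:Ga-quotient-1} with $S=B$ to get the $\mathbb{G}_{a,B}$-action, use Zariski's finiteness theorem \cite{Zar54} (via $\mathrm{tr.deg}\,\mathrm{Frac}(A)^{\partial}=2$, which is exactly where $\dim X=3$ enters) to conclude that $A^{\mathbb{G}_{a}}=\ker\partial$ is finitely generated, and then factor $f$ through $\rho:X\rightarrow Y=\mathrm{Spec}(A^{\mathbb{G}_{a}})$. Up to that point your write-up is correct and in fact spells out details the paper leaves implicit.

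There is, however, one genuine gap: your construction of $h:Y\rightarrow B$ from the inclusion $\Gamma(B,\mathcal{O}_{B})\hookrightarrow A^{\mathbb{G}_{a}}$ silently assumes that $B$ is affine. The proposition only assumes that $B$ is a smooth curve, and the complete case is not vacuous: Corollary \ref{cor:A3-irrational} explicitly allows $B\simeq\mathbb{P}^{1}$, and the paper exhibits a morphism $f:\mathbb{A}^{3}\rightarrow\mathbb{P}^{1}$ of exactly this kind. When $B$ is complete, $\Gamma(B,\mathcal{O}_{B})=\mathbb{C}$, so your ring-theoretic argument produces only the structure morphism $Y\rightarrow\mathrm{Spec}(\mathbb{C})$ and no morphism $Y\rightarrow B$ at all; likewise the condition that $\partial$ be a $\Gamma(B,\mathcal{O}_{B})$-derivation becomes vacuous and no longer encodes the $\mathbb{G}_{a}$-invariance of $f$. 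The paper avoids this by invoking that $Y$ is a categorical quotient in the category of algebraic varieties (not merely of affine ones), so that the $\mathbb{G}_{a}$-invariant morphism $f:X\rightarrow B$ factors through $\rho$ regardless of whether $B$ is affine or projective. To repair your proof you should either cite this categorical quotient property, or argue directly: $f$ is constant on general fibers of $\rho$ (these are orbit closures), hence induces a morphism $V\rightarrow B$ on an open subset $V\subset Y$ over which $\rho$ is a trivial $\mathbb{A}^{1}$-bundle, and one must then show this extends to a morphism on all of $Y$; this extension step is precisely what the purely algebraic argument cannot see. As a secondary remark, your final paragraph works harder than necessary: the clause ``a general fiber of $f$ admits an $\mathbb{A}^{1}$-fibration of affine type'' already follows fiberwise from Proposition \ref{prop:Ga-quotient-1}, since $\rho_{b}:X_{b}\rightarrow X_{b}/\!/\mathbb{G}_{a,b}$ is an $\mathbb{A}^{1}$-fibration over a normal \emph{affine} curve, so the generic base-change identification $Y_{b}\simeq X_{b}/\!/\mathbb{G}_{a}$, while salvageable along the lines you sketch, is not needed for that conclusion.
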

\begin{proof}
By virtue of Proposition \ref{prop:Ga-quotient-1}, there exist an
effective $\mathbb{G}_{a,B}$-action on $X$ such that for a general
closed point $b\in B$, the $\mathbb{A}^{1}$-fibration $\pi_{b}:X_{b}\rightarrow C_{b}$
on $X_{b}$ factors through the algebraic quotient $\rho_{b}:X_{b}\rightarrow X_{b}/\!/\mathbb{G}_{a,b}=\mathrm{Spec}(\Gamma(X_{b},\mathcal{O}_{X_{b}})^{\mathbb{G}_{a,b}})$.
Since $X$ is a threefold, the ring of invariants $\Gamma(X,\mathcal{O}_{X})^{\mathbb{G}_{a,B}}$
is finitely generated \cite{Zar54}. The quotient morphism $\rho:X\rightarrow Y=\mathrm{Spec}(\Gamma(X,\mathcal{O}_{X})^{\mathbb{G}_{a,B}})$
is an $\mathbb{A}^{1}$-fibration, and since $Y$ is a categorical
quotient in the category of algebraic varieties, the invariant morphism
$f:X\rightarrow B$ factors through $\rho$. \end{proof}
\begin{cor}
\label{cor:A3-irrational} Let $f:\mathbb{A}^{3}\rightarrow B$ be
a morphism onto a smooth curve $B$ with irrational $\mathbb{A}^{1}$-ruled
general fibers. Then $B$ is isomorphic to either $\mathbb{P}^{1}$
or $\mathbb{A}^{1}$ and there exists a factorization $f=h\circ\rho:\mathbb{A}^{3}\rightarrow\mathbb{A}^{2}\rightarrow B$,
where $\rho:\mathbb{A}^{3}\rightarrow\mathbb{A}^{2}$ is the quotient
morphism of an effective $\mathbb{G}_{a,B}$-action on $\mathbb{A}^{3}$. \end{cor}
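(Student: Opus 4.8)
The plan is to derive everything from Proposition \ref{prop:Ga-quotient-2}, whose hypotheses are met by $X=\mathbb{A}^{3}$: it is a normal affine threefold whose divisor class group $\mathrm{Cl}(\mathbb{A}^{3})=0$ is finite, and by assumption the general closed fibers of $f$ are irrational $\mathbb{A}^{1}$-ruled surfaces. Applying that proposition yields a factorization $f=h\circ\rho:\mathbb{A}^{3}\stackrel{\rho}{\longrightarrow}Y\stackrel{h}{\longrightarrow}B$ in which $\rho$ is the algebraic quotient morphism $\mathbb{A}^{3}\rightarrow Y=\mathrm{Spec}(\Gamma(\mathbb{A}^{3},\mathcal{O})^{\mathbb{G}_{a,B}})$ of an effective $\mathbb{G}_{a,B}$-action. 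What remains is to identify $Y$ with $\mathbb{A}^{2}$ and to determine the possibilities for $B$.

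First I would identify $Y\simeq\mathbb{A}^{2}$. Since the $\mathbb{G}_{a,B}$-action is effective, the associated locally nilpotent derivation $\partial$ of the polynomial ring $\Gamma(\mathbb{A}^{3},\mathcal{O})=k[x,y,z]$ is nonzero, and $Y=\mathrm{Spec}(\ker\partial)$. By the theorem of Miyanishi on the kernels of nonzero locally nilpotent derivations of a polynomial ring in three variables, $\ker\partial$ is itself isomorphic to a polynomial ring in two variables over $k$, whence $Y\simeq\mathbb{A}^{2}$. This is the step I expect to be the main obstacle: it is not a formal consequence of the earlier results but rests on Miyanishi's classification of rings of invariants of additive group actions on $\mathbb{A}^{3}$.

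Next I would pin down $B$. As $f$ is dominant, the induced inclusion $k(B)\hookrightarrow k(\mathbb{A}^{3})$ exhibits $B$ as a unirational curve, hence a rational one in characteristic zero, so its smooth projective model is $\mathbb{P}^{1}$ and $B$ is either $\mathbb{P}^{1}$ or the complement of finitely many points $p_{1},\ldots,p_{m}$ in $\mathbb{P}^{1}$. To exclude $m\geq 2$ I would argue with units: the pullback $f^{*}:\Gamma(B,\mathcal{O}_{B})\rightarrow\Gamma(\mathbb{A}^{3},\mathcal{O})=k[x,y,z]$ is injective because $f$ is dominant, and it sends units to units. Since $\Gamma(\mathbb{A}^{3},\mathcal{O})^{*}=k^{*}$, this forces $\Gamma(B,\mathcal{O}_{B})^{*}=k^{*}$. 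However, the coordinate ring of $\mathbb{P}^{1}\setminus\{p_{1},\ldots,p_{m}\}$ admits a nonconstant unit as soon as $m\geq 2$, namely a rational function with divisor $p_{1}-p_{2}$. Hence $m\leq 1$, so $B\simeq\mathbb{P}^{1}$ when $m=0$ and $B\simeq\mathbb{A}^{1}$ when $m=1$.

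Combining these steps gives exactly the asserted factorization $f=h\circ\rho:\mathbb{A}^{3}\rightarrow\mathbb{A}^{2}\rightarrow B$ with $\rho$ the quotient morphism of an effective $\mathbb{G}_{a,B}$-action and $B$ isomorphic to $\mathbb{P}^{1}$ or $\mathbb{A}^{1}$. Apart from the invocation of Miyanishi's theorem to obtain $Y\simeq\mathbb{A}^{2}$, the argument is a direct application of Proposition \ref{prop:Ga-quotient-2} together with elementary facts about units in coordinate rings and the rationality of unirational curves.
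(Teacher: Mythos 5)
Your proposal is correct and takes essentially the same route as the paper: the factorization is obtained by applying Proposition \ref{prop:Ga-quotient-2} to $X=\mathbb{A}^{3}$ (where $\mathrm{Cl}(\mathbb{A}^{3})=0$) and then identifying the quotient $Y=\mathbb{A}^{3}/\!/\mathbb{G}_{a}$ with $\mathbb{A}^{2}$ via Miyanishi's theorem \cite{Mi84}, exactly as in the paper. The only (immaterial) difference is how $B$ is pinned down: you use L\"uroth plus the absence of nonconstant units in $k[x,y,z]$, whereas the paper simply observes that $B$ is dominated by a general line of $\mathbb{A}^{3}$, so that $B$ is an open subset of $\mathbb{P}^{1}$ containing the image of $\mathbb{A}^{1}$ and hence is $\mathbb{P}^{1}$ or $\mathbb{A}^{1}$; both arguments are elementary and valid.
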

\begin{proof}
Since $B$ is dominated by a general line in $\mathbb{A}^{3}$, it
is necessarily isomorphic to $\mathbb{P}^{1}$ or $\mathbb{A}^{1}$.
The second assertion follows from Proposition \ref{prop:Ga-quotient-2}
and the fact that the algebraic quotient of every nontrivial $\mathbb{G}_{a}$-action
on $\mathbb{A}^{3}$ is isomorphic to $\mathbb{A}^{2}$ \cite{Mi84}. \end{proof}
\begin{example}
In Corollary \ref{cor:A3-irrational} above, the base curve $B$ need
not be affine. For instance, the morphism 
\[
f:\mathbb{A}^{3}=\mathrm{Spec}(\mathbb{C}[x,y,z])\longrightarrow\mathbb{P}^{1},(x,y,z)\mapsto[((xz-y^{2})x^{2}+1:(xz-y^{2})^{3}]
\]
defines a family whose general member is isomorphic to the product
$C_{\lambda}\times\mathbb{A}^{1}$ where $C_{\lambda}\subset\mathbb{A}^{2}=\mathrm{Spec}(\mathbb{C}[xz-y^{2},x])$
is the affine elliptic curve with equation $(xz-y^{2})^{3}+\lambda((xz-y^{2})x^{2}+1)=0$.
The subring $\mathbb{C}[xz-y^{2},x]$ of $\mathbb{C}[x,y,z]$ coincides
with the ring of invariants of the $\mathbb{G}_{a}$-action associated
with the locally nilpotent $\mathbb{C}[x]$-derivation $x\partial_{y}+2y\partial_{z}$
and $f$ is the composition of the quotient morphism $\rho:\mathbb{A}^{3}\rightarrow\mathbb{A}^{2}=\mathbb{A}^{3}/\!/\mathbb{G}_{a}=\mathrm{Spec}(\mathbb{C}[u,v])$,
$(x,y,z)\mapsto(xz-y^{2},x)$ and the morphism $h:\mathbb{A}^{2}=\mathrm{Spec}(\mathbb{C}[u,v])\rightarrow\mathbb{P}^{1}$,
$\left(u,v\right)\mapsto[uv^{2}+1:u^{3}]$.
\end{example}
Corollary \ref{cor:A3-irrational} above implies in particular that
a general fiber of a regular function $f:\mathbb{A}^{3}\rightarrow\mathbb{A}^{1}$
cannot be an irrational surface equipped with an $\mathbb{A}^{1}$-fibration
of complete type only. In contrast, regular functions $f:\mathbb{A}^{3}\rightarrow\mathbb{A}^{1}$
whose general fibers are rational and equipped with $\mathbb{A}^{1}$-fibrations
of complete type only do exist, as illustrated by the following example.
\begin{example}
Let $f=x^{3}-y^{3}+z(z+1)\in\mathbb{C}[x,y,z]$ and let $f:\mathbb{A}^{3}=\mathrm{Spec}(\mathbb{C}[x,y,z])\rightarrow\mathbb{A}^{1}=\mathrm{Spec}(\mathbb{C}[\lambda])$
be the corresponding morphism. The closure $\overline{S}_{\lambda}$
in $\mathbb{P}^{3}=\mathrm{Proj}(\mathbb{C}[x,y,z,t])$ of a general
fiber $S_{\lambda}=f^{*}(\lambda)$ of $f$ is a smooth cubic surface
which intersects the hyperplane $H_{\infty}=\left\{ t=0\right\} $
along the union $B_{\lambda}$ of three lines meeting at the Eckardt
point $p=\left[0:0:1:0\right]$. Thus $S_{\lambda}$ is rational and
a direct computation reveals that $\kappa(S_{\lambda})=-\infty$.
So by virtue \cite{MS80}, $S_{\lambda}$ admits an $\mathbb{A}^{1}$-fibration
$\pi_{\lambda}:S_{\lambda}\rightarrow C_{\lambda}$ over a smooth
rational curve $C_{\lambda}$. If $C_{\lambda}$ was affine, then
there would exist a non trivial $\mathbb{G}_{a}$-action on $S_{\lambda}$
having the general fibers of $\pi_{\lambda}$ as general orbits. But
it is straightforward to check that every automorphism of $S_{\lambda}$
considered as a birational self-map of $\overline{S}_{\lambda}$ is
in fact a biregular automorphism of $\overline{S}_{\lambda}$ preserving
the boundary $B_{\lambda}$. So the automorphism group of $S_{\lambda}$
injects into the group $\mathrm{Aut}(\overline{S}_{\lambda},B_{\lambda})$
of automorphisms of the pair $(\overline{S}_{\lambda},B_{\lambda})$.
The latter being a finite group, we conclude that no such $\mathbb{G}_{a}$-action
exists, and hence that $S_{\lambda}$ only admits $\mathbb{A}^{1}$-fibrations
of complete type. An $\mathbb{A}^{1}$-fibration $\pi_{\lambda}:S_{\lambda}\rightarrow\mathbb{P}^{1}$
can be obtained as follows: letting $B_{\lambda}=L_{1}\cup L_{2}\cup L_{3}$,
$L_{1}$ is a member of a $6$-tuple of pairwise disjoint lines whose
simultaneous contraction realizes $\overline{S}_{\lambda}$ as a blow-up
$\sigma:\overline{S}_{\lambda}\rightarrow\mathbb{P}^{2}$ of $\mathbb{P}^{2}$
in such a way that $\sigma(L_{2})$ and $\sigma(L_{3})$ are respectively
a smooth conic and its tangent line at the point $p=\sigma(L_{1})$.
The birational transform $\overline{\pi}_{\lambda}:\overline{S}_{\lambda}\dashrightarrow\mathbb{P}^{1}$
on $\overline{S}_{\lambda}$ of the pencil generated by $\sigma(L_{2})$
and $2\sigma(L_{3})$ restricts to an $\mathbb{A}^{1}$-fibration
$\pi_{\lambda}:S_{\lambda}\rightarrow\mathbb{P}^{1}$ with two degenerate
fibers: an irreducible one, of multiplicity two, consisting of the
intersection with $S_{\lambda}$ of the unique exceptional divisor
of $\sigma$ whose center is supported on $\sigma(L_{3})\setminus\{p\}$,
and a smooth one consisting of the intersection with $S_{\lambda}$
of the four exceptional divisors of $\sigma$ with centers supported
on $\sigma(L_{2})\setminus\left\{ p\right\} $. 
\end{example}
\bibliographystyle{amsplain}

\end{document}